\newtheorem{theorem}{Theorem}
\newtheorem{corollary}[theorem]{Corollary}
\newtheorem{proposition}[theorem]{Proposition}
\newcounter{example}
\newenvironment{example}{\addtocounter{example}{1} \textbf{Example \theexample.} \textnormal}{\medskip}
\numberwithin{equation}{section}
\numberwithin{theorem}{section}
\newcommand{\sign}{\textup{sign\,}}
\newcommand{\supp}{\textup{supp}}
\begin{document}
\title{Simultaneous Gaussian quadrature \\ for Angelesco systems}
\author{Doron S. Lubinsky, Walter Van Assche \\ Georgia Institute of Technology and KU Leuven}
\date{\today}

\maketitle

\begin{abstract}
We investigate simultaneous Gaussian quadrature for two integrals of the same function $f$ but on two disjoint intervals.
The quadrature nodes are zeros of a type II multiple orthogonal polynomial for an Angelesco system. We recall some known
results for the quadrature nodes and the quadrature weights and prove some new results about the convergence of the
quadrature formulas. Furthermore we give some estimates of the quadrature weights. Our results are based on a vector
equilibrium problem in potential theory and weighted polynomial approximation. 
\end{abstract}

\section{Introduction}

\subsection{Simultaneous Gaussian quadrature}

Suppose we are given $r$ measures $\mu_1,\ldots,\mu_r$ on the real line and a function $f: \mathbb{R} \to \mathbb{R}$ and that we want to
approximate the integrals
\[  \int f(x) \, d\mu_j(x), \qquad 1 \leq j \leq r  \]
simultaneously with the same function $f$ but with different measures $\mu_1,\ldots, \mu_r$. Our goal is to investigate interpolatory
quadrature formulas so that
\[    \int f(x) \,d\mu_j(x) = \sum_{k=1}^N \lambda_{k,N}^{(j)} f(x_{k,N})  \]
holds for polynomials $f$ of degree as large as possible, with one set of interpolation points $\{ x_{k,N}, 1 \leq k \leq N\}$ and $r$ sets
of quadrature weights $\{\lambda_{k,N}^{(j)}, 1 \leq k \leq N \}$, with $1 \leq j \leq r$. This requires only $N$ function evaluations, but
$rN$ quadrature weights. This notion of simultaneous quadrature was introduced by Borges \cite{Borges}. Later the relation with
multiple orthogonal polynomials was observed in  \cite{ECousWVA}, \cite{JCousWVA}, \cite{FIL}, \cite{MiloStan}, \cite[Chapter 4,\S 3.5]{NikiSor}.
However, Angelesco already introduced simultaneous quadrature for several integrals in 1918 in \cite{Angel1} for an Angelesco system, 
but apparently that paper was overlooked for a long time.

\subsection{Multiple orthogonal polynomials}

The type II multiple orthogonal polynomial $P_{\vec{n}}$ with multi-index $\vec{n}=(n_1,\ldots,n_r) \in \mathbb{N}^r$ for the system of measures $\mu_1,\ldots,\mu_r$ is defined as the monic polynomial
of degree $|\vec{n}|=n_1+\cdots,n_r$ for which
\begin{equation}   \label{multorth}
   \int P_{\vec{n}}(x) x^k \, d\mu_j(x) = 0, \qquad 0 \leq k \leq n_j-1, 
\end{equation}
for $1 \leq j \leq r$. If this monic polynomial exists and is unique, then we call the multi-index $\vec{n}$ a normal index.
If we choose the quadrature nodes  as the zeros $x_{k,\vec{n}}$ of $P_{\vec{n}}$, then the corresponding interpolatory quadrature nodes are
\begin{equation}  \label{lambda-ell}
  \lambda_{k,\vec{n}}^{(j)} = \int \ell_{k,\vec{n}} \, d\mu_j(x), 
\end{equation}
where $\ell_{k,\vec{n}}$ is the $k$th fundamental polynomial of Lagrange interpolation for the nodes $\{x_{k,\vec{n}}, 1 \leq k \leq |\vec{n}|\}$, 
and we have
\[   \sum_{j=1}^{|\vec{n}|} \lambda_{k,\vec{n}}^{(j)} p(x_{k,\vec{n}}) = \int p(x)\, d\mu_j(x)  \]
whenever $p$ is a polynomial of degree at most $|\vec{n}|+n_j-1$. Indeed, if $p$ is a polynomial of degree $\leq |\vec{n}|+n_j-1$ and if 
$L_{\vec{n}}$ is the Lagrange interpolating polynomial for $p$ at the nodes $\{x_{k,\vec{n}}, 1 \leq k \leq |\vec{n}|\}$, then
\[    L_{\vec{n}}(x) - p(x) = P_{\vec{n}}(x) q_{n_j-1}(x),  \]
where $q_{n_j-1}$ is a polynomial of degree at most $n_j-1$. Integrating then gives
\[   \int L_{\vec{n}}(x) \, d\mu_j(x) - \int p(x)\, d\mu_j(x) = \int P_{\vec{n}}(x) q_{n_j-1}(x)\, d\mu_j(x)  = 0, \]
where the latter follows from \eqref{multorth}. The result then follows since
\[   \int L_{\vec{n}}(x)\, d\mu_j(x) = \sum_{k=1}^{|\vec{n}|} \lambda_{k,\vec{n}}^{(j)} p(x_{k,\vec{n}}), \]
as is usual with interpolatory integration rules.
If we require that the $r$ quadrature rules are correct for $p$, then the degree of $p$ needs to be at most $|\vec{n}| + \min_{1 \leq j \leq r} n_j -1$.
This degree is maximal if all the $n_j$ are equal. Hence from now on we will use $N=rn$ nodes which are the zeros of the diagonal multiple
orthogonal polynomial $P_{(n,n,\ldots,n)}$, and the quadrature formulas will be exact whenever $p$ is a polynomial of degree at most $(r+1)n-1$.
We denote the zeros of the diagonal multiple orthogonal polynomial by $\{ x_{k,rn}, 1 \leq k \leq rn\}$ in increasing order
\[   -\infty < x_{1,rn} < x_{2,rn} < \cdots < x_{rn,rn} < \infty  , \]
and the corresponding quadrature weights
by $\{ \lambda_{k,rn}^{(j)}, 1 \leq k \leq rn\}$, for $1 \leq j \leq r$. So the quadrature rules becomes
\begin{equation}  \label{quadrat}
  \sum_{k=1}^{rn}  \lambda_{k,rn}^{(j)} p(x_{k,rn}) = \int p(x)\, d\mu_j(x), \qquad  p \in \mathbb{P}_{(r+1)n-1}, 
\end{equation}
where $\mathbb{P}_n$ is the set of all polynomials of degree at most $n$. 
Note that for $r=1$ we obtain the well known Gaussian quadrature rule for one integral. 

Our goal is to investigate the following problems
\begin{itemize}
  \item What can be said about the quadrature nodes $x_{k,rn}$ (location) and the quadrature weights $\lambda_{k,rn}^{(j)}$? 
  In particular we want to know the sign of the
  quadrature weights. Recall that for Gaussian quadrature ($r=1$) the quadrature weights are the Christoffel numbers and they are always positive.
  This is essential for the convergence of the quadrature rule.
  \item Under which conditions on $f$ and on the measures $(\mu_1,\ldots,\mu_r)$ will the quadrature rules converge to the required integrals?
  \item What can be said of the size of the quadrature weights $\lambda_{k,rn}^{(j)}$ for $1 \leq j \leq r$?
\end{itemize}

\subsection{Angelesco systems}

In this paper we will restrict our analysis to measures of an Angelesco system.
Simultaneous quadrature formulas for a Nikishin system were investigated earlier by Fidalgo, Ill\'an and L\'opez in \cite{FIL}.

An Angelesco system is a system of positive measures on the real line for which the supports are on disjoint intervals:
$\textup{supp}(\mu_j) \subset [a_j,b_j]$ and $(a_i,b_i) \cup (a_j,b_j) = \emptyset$ whenever $i \neq j$. Observe that we allow the intervals to touch.
We will sort the intervals from left to right so that
\[   -\infty < a_1 < b_1 \leq a_2 < b_2 \leq \cdots \leq a_r < b_r < \infty .  \]
Such systems were introduced by Angelesco in 1918--1919 \cite{Angel1, Angel2} and later also independently by Nikishin \cite{Niki}.
An important result is that the type II multiple orthogonal polynomial $P_{\vec{n}}$ for any multi-index $\vec{n} = (n_1,\ldots,n_r)$ has
$n_j$ simple zeros on $(a_j,b_j)$ for every $j$. Hence the multiple orthogonal polynomial can be factored as 
\begin{equation}   \label{Pprod}
 P_{\vec{n}}(x) = \prod_{j=1}^r p_{\vec{n},j}(x), 
\end{equation}
where each $p_{\vec{n},j}$ is a polynomial of degree $n_j$ with all its zeros on $(a_j,b_j)$. Each polynomial $p_{\vec{n},j}$ is in fact an ordinary orthogonal polynomial of degree $n_j$ on the interval $[a_j,b_j]$ for the measure $\prod_{i\neq j} |p_{\vec{n},i}(x)|\, d\mu_j(x)$. 
Note that every $p_{\vec{n},i}$ with $i\neq j$ has constant sign on $[a_j,b_j]$. 

\subsection{Known results}

The following result is already known, see \cite[Chapter 4, Prop.~3.5]{NikiSor}, but we give a proof because of its importance.

\begin{theorem}  \label{thm1}
The quadrature weights have the following property:
\begin{equation}  \label{lampos}
   \lambda_{k,rn}^{(j)} > 0, \qquad    \textup{when $x_{k,rn} \in [a_j,b_j]$} , 
\end{equation}
and $\lambda_{k,rn}^{(j)}$ has alternating sign when $x_{k,rn} \in [a_i,b_i]$ and $i \neq j$.
\end{theorem}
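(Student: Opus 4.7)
The plan is to handle the two regimes of the theorem by testing the exactness relation \eqref{quadrat} on $\mathbb{P}_{(r+1)n-1}$ against suitable polynomials built from the factorisation \eqref{Pprod}.

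For the positivity when $x_{k,rn}\in[a_j,b_j]$, note that $x_{k,rn}$ is then one of the $n$ zeros of $p_{\vec{n},j}$ in \eqref{Pprod}. I would apply \eqref{quadrat} to
\[
p(x)=\tilde\ell_k(x)^2\prod_{i\neq j}p_{\vec{n},i}(x),
\]
where $\tilde\ell_k$ is the fundamental Lagrange polynomial for the zeros of $p_{\vec{n},j}$; its degree is $2(n-1)+(r-1)n=(r+1)n-2$, so \eqref{quadrat} applies. Each other node is either another zero of $p_{\vec{n},j}$ (annihilated by $\tilde\ell_k^2$) or a zero of some $p_{\vec{n},i}$ with $i\neq j$ (annihilated by the product), so only the $k$-th term on the left-hand side of \eqref{quadrat} survives. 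Since every $p_{\vec{n},i}$ with $i\neq j$ has constant sign on the whole interval $[a_j,b_j]$, the ratio $\prod_{i\neq j}p_{\vec{n},i}(x)/\prod_{i\neq j}p_{\vec{n},i}(x_{k,rn})$ is positive on $[a_j,b_j]$, and one exhibits $\lambda_{k,rn}^{(j)}$ as the integral of a nonnegative function against $d\mu_j$, hence strictly positive.

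For the alternation when $x_{k,rn}=\xi\in[a_i,b_i]$ with $i\neq j$, the Lagrange representation gives
\[
\lambda_{k,rn}^{(j)}=\frac{1}{P_{\vec{n}}'(\xi)}\int\frac{P_{\vec{n}}(x)}{x-\xi}\,d\mu_j(x)=:\frac{I(\xi)}{P_{\vec{n}}'(\xi)}.
\]
Using \eqref{Pprod}, each factor $p_{\vec{n},l}(\xi)$ with $l\neq i$ has constant sign for $\xi\in[a_i,b_i]$, so $P_{\vec{n}}'(\xi)=p_{\vec{n},i}'(\xi)\prod_{l\neq i}p_{\vec{n},l}(\xi)$ alternates in sign across the $n$ consecutive zeros of $p_{\vec{n},i}$ in $[a_i,b_i]$. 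The claim therefore reduces to showing that $\xi\mapsto I(\xi)$ keeps a single sign on $[a_i,b_i]$. Factoring $P_{\vec{n}}=p_{\vec{n},j}\prod_{l\neq j}p_{\vec{n},l}$ and pulling out the constant sign of $\prod_{l\neq j}p_{\vec{n},l}$ on $[a_j,b_j]$, one has $I(\xi)=\pm F(\xi)$ with
\[
F(\xi)=\int\frac{p_{\vec{n},j}(x)}{x-\xi}\,d\tilde\mu_j(x),\qquad d\tilde\mu_j=\prod_{l\neq j}|p_{\vec{n},l}|\,d\mu_j,
\]
and the excerpt identifies $p_{\vec{n},j}$ as the genuine monic orthogonal polynomial of degree $n$ for $\tilde\mu_j$ on $[a_j,b_j]$.

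The decisive step is a Lagrange-interpolation identity for the function of the second kind $F$: expanding $1/(\xi-x)$ in $x$ as its Lagrange interpolant of degree $n-1$ at the zeros of $p_{\vec{n},j}$ plus the remainder term, integrating against $p_{\vec{n},j}\,d\tilde\mu_j$ and killing the interpolant piece by orthogonality yields
\[
F(\xi)=\frac{1}{p_{\vec{n},j}(\xi)}\int\frac{p_{\vec{n},j}(x)^2}{x-\xi}\,d\tilde\mu_j(x).
\]
For $\xi\in[a_i,b_i]$ the denominator $p_{\vec{n},j}(\xi)$ has nonzero constant sign, and the integrand on the right has constant sign as well since $(x-\xi)$ does on $[a_j,b_j]$; hence $F$, and so $I$, is of one sign on $[a_i,b_i]$, which combined with the alternation of $P_{\vec{n}}'$ gives the theorem. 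The real obstacle is this last identity: the alternation is not visible from the Angelesco support structure alone but rests on the orthogonality of $p_{\vec{n},j}$ with respect to $\tilde\mu_j$, which forces $F$ to be nonvanishing outside $[a_j,b_j]$.
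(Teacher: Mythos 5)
Your proposal is correct and follows essentially the same route as the paper: your positivity test polynomial $\tilde\ell_k^2\prod_{i\neq j}p_{\vec{n},i}$ and the paper's $\ell_{k,rn}\,\ell_{k,n}^{(j)}$ produce the identical integrand, and your second-kind-function identity for $F$ is precisely what the paper obtains by applying \eqref{quadrat} to $\ell_{k,rn}(x)p_{\vec{n},j}(x)$, both resting on the orthogonality of $p_{\vec{n},j}$ with respect to $\prod_{l\neq j}|p_{\vec{n},l}|\,d\mu_j$. In each case the argument ends with the same representation of $\lambda_{k,rn}^{(j)}$ in which every factor has fixed sign on $[a_j,b_j]$ except $p_{\vec{n},i}'(x_{k,rn})$, which alternates over consecutive zeros in $[a_i,b_i]$.
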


\begin{proof}
Let $\ell_{k,rn}$ $(1 \leq k \leq rn)$ be the fundamental polynomials of Lagrange interpolation for the nodes $\{ x_{i,rn}, 1 \leq i \leq rn\}$, 
for which
\[   \ell_{k,rn}(x_{i,rn}) = \delta_{i,k}, \]
and let $\ell_{k,n}^{(j)}$ be the fundamental polynomial of Lagrange interpolation for the zeros of $p_{\vec{n},j}$ which
we defined in \eqref{Pprod}, i.e., for the nodes $\{ x_{i,rn}, (j-1)n +1 \leq i \leq jn\}$. 
If we take the polynomial $p(x)=\ell_{k,rn}(x)\ell_{k,n}^{(j)}$ of degree $(r+1)n-2$, then the quadrature \eqref{quadrat} gives
\[   \lambda_{k,rn}^{(j)} = \int_{a_j}^{b_j} \ell_{k,rn}(x) \ell_{k,n}^{(j)}(x)\, d\mu(x)  \]
for $(j-1)n+1 \leq k \leq jn$, i.e., for the quadrature weights corresponding to the nodes $x_{k,rn} \in [a_j,b_j]$.
The fundamental polynomials of Lagrange interpolation are given by
\[   \ell_{k,rn}(x) = \frac{P_{\vec{n}}(x)}{(x-x_{k,rn}) P_{\vec{n}}'(x_{k,rn})}, \quad
     \ell_{k,n}^{(j)}(x) = \frac{p_{\vec{n},j}(x)}{(x-x_{k,rn}) p_{\vec{n},j}'(x_{k,rn})}, \]
and $P_{\vec{n}}'(x_{k,rn}) = p_{\vec{n},j}'(x_{k,rn}) \prod_{i \neq j} p_{\vec{n},i}(x_{k,rn})$, hence
\[    \lambda_{k,rn}^{(j)}
 = \int_{a_j}^{b_j} \prod_{i \neq j} \frac{p_{\vec{n},i}(x)}{p_{\vec{n},i}(x_{k,rn})} \left( \frac{p_{\vec{n},j}(x)}{(x-x_{k,rn}) p_{\vec{n},j}'(x_{k,rn})} \right)^2 \, d\mu_j(x). \]
The integral is positive since $p_{\vec{n},i}(x)$ has the same sign as $p_{\vec{n},i}(x_{k,rn})$ on $[a_j,b_j]$. This proves \eqref{lampos}.

Suppose next that $x_{k,rn} \in [a_i,b_i]$ with $i \neq j$. Then we take the polynomial $p(x) = \ell_{k,rn}(x)p_{\vec{n},j}(x)$ of degree $(r+1)n-1$ in the
quadrature formula \eqref{quadrat} to find
\[   \lambda_{k,rn}^{(j)} p_{\vec{n},j}(x_{k,rn}) = \int_{a_j}^{b_j} \ell_{k,rn}(x) p_{\vec{n},j}(x)\, d\mu_j(x) . \]
Now we have $P_{\vec{n}}'(x_{k,rn}) = p_{\vec{n},i}'(x_{k,rn}) \prod_{m \neq i} p_{\vec{n},m}(x_{k,rn})$, so that
\[   \lambda_{k,rn}^{(j)}  = \int_{a_j}^{b_j} \prod_{m \neq i,j} \frac{p_{\vec{n},m}(x)}{p_{\vec{n},m}(x_{k,rn})}  \frac{p_{\vec{n},i}(x)}{(x-x_{k,rn}) 
p_{\vec{n},i}'(x_{k,rn})}    \left( \frac{p_{\vec{n},j}(x)}{p_{\vec{n},j}(x_{k,rn})} \right)^2 \, d\mu_j(x).  \]
Each factor in the integrand has constant sign on $[a_j,b_j]$, independent of $k$,  except for $p_{\vec{n},i}'(x_{k,rn})$ which has alternating sign when $x_{k,rn}$ runs through the interval $[a_i,b_i]$.
\end{proof}

A careful analysis of the sign of $\lambda_{k,rn}^{(j)}$ shows that it will be positive for the zero in $[a_i,b_i]$ which is closest to
$[a_j,b_j]$, i.e.,  $\lambda_{in,rn}^{(j)} > 0$ when $i < j$ and $\lambda_{(i-1)n+1,rn}^{(j)} >0$ when $i > j$ 
(see \cite[Chapter 4, Prop.~3.5 (2)]{NikiSor}).

The positive quadrature weights $\{ \lambda^{(j)}_{(j-1)n+k}, 1 \leq k \leq n\}$ are related to the Christoffel numbers
of the weight $\prod_{i\neq j} p_{\vec{n},i}(x) \, d\mu_j(x)$. Indeed, take $p(x) = q(x) \prod_{i\neq j} p_{\vec{n},i}(x)$, 
with $q \in \mathbb{P}_{2n-1}$, then
\eqref{quadrat} gives for every $q \in \mathbb{P}_{2n-1}$
\[    \int_{a_j}^{b_j}  q(x) \ \prod_{i \neq j} p_{\vec{n},i}(x)\, d\mu_j(x) 
 = \sum_{k=(j-1)n+1,rn}^{jn} \lambda_{k,rn}^{(j)} q(x_{k,rn}) \prod_{i \neq j} p_{\vec{n},i}(x_{k,rn}) ,  \]
and this is the Gaussian quadrature formula for the (varying) weight $\prod_{i \neq j} p_{\vec{n},i}(x)\, d\mu_j(x)$. Hence
\begin{equation}  \label{lambda/lambda}
    \lambda_{(j-1)n+k}^{(j)} \prod_{i \neq j} p_{\vec{n},i}(x_{(j-1)n+k,rn}) = \lambda_{k,n}\left( \prod_{i \neq j} p_{\vec{n},i} \, d\mu_j\right),  
\end{equation} 
where $\lambda_{k,n}(\mu)$ are the Christoffel numbers for a measure $\mu$, i.e.,
\[    \lambda_{k,n}(\mu) = \lambda_n(\xi_k;\mu), \quad  \lambda_n(x;\mu) = \frac{1}{\sum_{j=0}^{n-1} p_j^2(x;\mu)}, \]
where $\lambda_n(x;\mu)$ is the Christoffel function and $p_n(x;\mu)$ are the orthonormal polynomials for a positive measure $\mu$ on the real line, 
with $\{\xi_k, 1\leq k \leq n\}$ the zeros of $p_n(x;\mu)$.
 
\section{Simultaneous Gaussian quadrature on two intervals}

From now on we deal with the case $r=2$ with two intervals $[a_1,b_1]$ and $[a_2,b_2]$ (with $b_1 \leq a_2$) and write 
$P_{\vec{n}}(x) = P_{n,n}(x) = (-1)^n p_n(x)q_n(x)$, where $p_n$ has $n$ zeros on $[a_1,b_1]$ and $q_n$ has $n$ zeros on $[a_2,b_2]$:
\[    p_n(x) = \prod_{j=1}^n (x-x_{j,2n}), \quad q_n(x) = (-1)^n \prod_{j=n+1}^{2n} (x-x_{j,2n}), \]
where the $(-1)^n$ in the definition of $q_n$ is to ensure that $q_n >0$ on $[a_1,b_1]$. Recall our ordering of the zeros
\[   a_1 < x_{1,2n} < x_{2,2n} < \cdots < x_{n,rn} < b_1 \leq a_2 < x_{n+1,2n} < \cdots x_{2n,2n} < b_2 .  \]
The quadratures are
\begin{equation}  \label{quadrature1}
   \sum_{j=1}^{2n} \lambda_{j,2n}^{(1)} P(x_{j,2n}) = \int_{a_1}^{b_1} P(x)\, d\mu_1(x), 
\end{equation}
and
\begin{equation}   \label{quadrature2}
    \sum_{j=1}^{2n} \lambda_{j,2n}^{(2)} P(x_{j,2n}) = \int_{a_2}^{b_2} P(x)\, d\mu_2(x), 
\end{equation}
for every polynomial $P$ of degree $\leq 3n-1$.
From \eqref{lambda/lambda} we see that the positive quadrature weights are given by
\begin{equation}  \label{lambdaq}
    \lambda_{k,2n}^{(1)} = \frac{\lambda_{k,n}(q_n\,d\mu_1)}{q_n(x_{k,2n})}, \qquad 1 \leq k \leq n, 
\end{equation}
and similarly
\begin{equation}  \label{lambdap}
    \lambda_{n+k,2n}^{(2)} = \frac{\lambda_{k,n}(p_n\,d\mu_2)}{p_n(x_{n+k,2n})}, \qquad 1 \leq k \leq n.  
\end{equation}
For the alternating quadrature weights, it follows from \eqref{quadrature1} with $p(x)=q(x)p_n^2(x)$ that
\begin{equation} \label{alt1}
   \sum_{k=1}^n \lambda_{n+k,2n}^{(1)} q(x_{n+k,2n}) p_n^2(x_{n+k,2n}) = \int_{a_1}^{b_1} q(x) p_n^2(x)\, d\mu_1(x), 
\end{equation}
for every $q \in \mathbb{P}_{n-1}$. This is the interpolatory quadrature rule for integrals on $[a_1,b_1]$ with (positive)
weight $p_n^2\, d\mu_1$ and quadrature nodes on $[a_2,b_2]$. This is a very strange quadrature rule and one does not expect
good behavior since $[a_1,b_1]$ and $[a_2,b_2]$ are disjoint. In a similar way one also finds from \eqref{quadrature2} that
\begin{equation}   \label{alt2}
   \sum_{k=1}^n \lambda_{k,2n}^{(2)} q(x_{k,2n}) q_n^2(x_{k,2n}) = \int_{a_2}^{b_2} q(x) q_n^2(x)\, d\mu_2(x), 
\end{equation}
holds for every $q \in \mathbb{P}_{n-1}$. From Theorem \ref{thm1} we have
\[   \sign \lambda_{n+k,2n}^{(1)} = (-1)^{k-1}, \quad  \sign  \lambda_{k,2n}^{(2)} = (-1)^{n-k}, \qquad 1 \leq k \leq n. \]

\subsection{Poss\'e-Chebyshev-Markov-Stieltjes inequalities}

First we recall the classical Poss\'e-Chebyshev-Markov-Stieltjes inequalities. Let $\mu$ be a positive measure on the real line with all
finite power moments
\[    \int x^j\, d\mu(x), \qquad j=0,1,2,\ldots\ . \]
Fix $n \geq 1$ and let
\[   -\infty < \xi_1 < \xi_2 < \cdots < \xi_n < \infty  \]
denote the zeros of the $n$th orthogonal polynomial for $\mu$. Let $1 \leq \ell \leq n$ and $f: (-\infty,\xi_\ell] \to [0,\infty)$
be a function with $2n$ continuous derivatives satisfying
\begin{equation}  \label{fder}
     f^{(k)}(x) \geq 0, \qquad 0 \leq k \leq 2n, \ x \in (-\infty,\xi_\ell). 
\end{equation}
Then \cite[Eq. (5.10) on p.~33]{Freud}
\begin{equation}   \label{PCMS}
   \sum_{k=1}^{\ell-1} \lambda_{k,n}(\mu) f(\xi_k) \leq \int_{-\infty}^{\xi_\ell} f(x)\, d\mu(x) \leq \sum_{k=1}^\ell \lambda_{k,n}(\mu) f(\xi_k),
\end{equation}
where $\lambda_{k,n}(\mu) = \lambda_n(\xi_k;\mu)$ and $\lambda_n(x;\mu)$ is the Christoffel function
\[    \lambda_n(x;\mu) = \frac{1}{\sum_{k=0}^{n-1} p_k^2(x;\mu)}, \]
where $\{p_k(x;\mu)\}$ are the orthonormal polynomials for $\mu$, and $\{\lambda_n(\xi_k), 1  \leq k \leq n\}$ are the Christoffel numbers or
Gaussian quadrature weights for the quadrature with nodes at the zeros of $p_n(x;\mu)$.
If, in addition, \eqref{fder} holds on the whole real line 
(in fact, it is sufficient to hold on the smallest interval that contains the support of $\mu$) 
then \cite[Lemma III.1.5 on p.~92]{Freud}
\begin{equation}   \label{PCMS2}
      \sum_{k=1}^n \lambda_n(\xi_k;\mu) f(\xi_k) \leq \int_{-\infty}^\infty f(x)\, d\mu(x).
\end{equation} 
Here is an analogue for the positive weights on the first interval $[a_1,b_1]$. A similar result also holds for the positive weights
on the second interval.

\begin{theorem}   \label{thm:PMS}
Let $n\geq 1$, $1 \leq \ell \leq n$, and $g: (-\infty,x_{\ell,2n}] \to [0,\infty)$ have $2n$ continuous derivatives there, with
\begin{equation}   \label{gder}
    g^{(k)}(x) \geq 0, \qquad 0 \leq k \leq 2n.  
\end{equation}
Then
\begin{equation}  \label{PMS1}
    \sum_{k=1}^{\ell-1} \lambda_{k,2n}^{(1)} g(x_{k,2n}) \leq \int_{a_1}^{x_{\ell,2n}} g(x)\, d\mu_1(x) 
   \leq \sum_{k=1}^\ell \lambda_{k,2n}^{(1)} g(x_{k,2n}) . 
\end{equation}
\end{theorem}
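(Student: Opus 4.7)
The plan is to reduce the claim to the classical Poss\'e-Chebyshev-Markov-Stieltjes inequality \eqref{PCMS} applied to the varying measure $d\nu := q_n\,d\mu_1$ on $[a_1,b_1]$. Recall from the factorization discussion in Section~1 that $p_n$ is the ordinary $n$th orthogonal polynomial for $d\nu$, and its zeros are precisely $x_{1,2n}<\cdots<x_{n,2n}$. Moreover \eqref{lambdaq} says the positive weights on the first interval satisfy $\lambda_{k,2n}^{(1)}=\lambda_{k,n}(\nu)/q_n(x_{k,2n})$, so the bookkeeping between the Christoffel numbers for $\nu$ and the quadrature weights $\lambda_{k,2n}^{(1)}$ will match up once we choose the right test function.

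The test function to feed into \eqref{PCMS} is
\[ F(x):=\frac{g(x)}{q_n(x)}. \]
Provided $F$ is admissible, the middle term $\int_{-\infty}^{x_{\ell,2n}} F\,d\nu$ becomes $\int_{a_1}^{x_{\ell,2n}} g(x)\,d\mu_1(x)$, and each summand $\lambda_{k,n}(\nu)F(x_{k,2n})$ collapses via \eqref{lambdaq} to $\lambda_{k,2n}^{(1)}g(x_{k,2n})$, so both inequalities in \eqref{PCMS} translate word for word into \eqref{PMS1}.

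The one thing to verify is that $F$ satisfies the hypothesis of \eqref{PCMS}, namely $F^{(k)}\geq 0$ for $0\leq k\leq 2n$ on $(-\infty,x_{\ell,2n})$. Here the Angelesco geometry does the work: since the zeros $x_{n+1,2n},\ldots,x_{2n,2n}$ of $q_n$ all lie in $[a_2,b_2]$ and $x_{\ell,2n}\leq x_{n,2n}<a_2$, we may write
\[ \frac{1}{q_n(x)}=\prod_{j=n+1}^{2n}\frac{1}{x_{j,2n}-x}, \]
and each factor has $k$th derivative $k!/(x_{j,2n}-x)^{k+1}>0$ on $(-\infty,x_{\ell,2n}]$ for every $k\geq 0$. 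Since the class of smooth functions on a half-line whose derivatives of all orders are nonnegative is closed under products (an immediate consequence of Leibniz's rule), $1/q_n$ itself is absolutely monotone on $(-\infty,x_{\ell,2n}]$. Applying Leibniz once more,
\[ F^{(k)}(x)=\sum_{j=0}^k\binom{k}{j}g^{(j)}(x)\bigl(1/q_n\bigr)^{(k-j)}(x)\geq 0,\qquad 0\leq k\leq 2n, \]
by the assumption \eqref{gder} on $g$, as required.

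The main obstacle is really just the positivity check above; the substantive observation is that $1/q_n$ is absolutely monotone on the half-line $(-\infty,a_2)$, which is what allows the absolute monotonicity of $g$ to pass through the division by $q_n$. Once that is in hand, \eqref{PCMS} applied to $\nu$ and $F$ yields \eqref{PMS1} directly after relabeling via \eqref{lambdaq}; the analogous statement for the positive weights on the second interval $[a_2,b_2]$ follows by the symmetric argument, with $F(x)=g(x)/p_n(x)$ and $1/p_n$ absolutely monotone on $(b_1,\infty)$ (up to an orientation reversal so that ``increasing'' derivatives have the right sign).
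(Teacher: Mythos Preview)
Your proof is correct and follows essentially the same route as the paper's: reduce to the classical Poss\'e--Chebyshev--Markov--Stieltjes inequality for the measure $q_n\,d\mu_1$ with test function $F=g/q_n$, and then verify $F^{(k)}\ge 0$ via Leibniz. The only cosmetic difference is in establishing the absolute monotonicity of $1/q_n$: you factor it as $\prod_j 1/(x_{j,2n}-x)$ and invoke closure under products, whereas the paper does an induction on the order of the derivative using the logarithmic-derivative identity $(1/q_n)' = (1/q_n)\sum_j 1/(x_{j,2n}-x)$; both arguments rest on Leibniz and are equally short.
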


\begin{proof}
It follows from \eqref{quadrature1} that for polynomials $P$ of degree $\leq 2n-1$
\begin{equation}  \label{quad1}
    \sum_{k=1}^n \lambda_{k,2n}^{(1)} P(x_{k,2n})q_n(x_{k,2n}) = \int_{a_1}^{b_1} P(x)q_n(x)\, d\mu_1(x).  
\end{equation}
If we let $d\mu = q_n\, d\mu_1$ then we see that \eqref{quad1} is the Gaussian quadrature for the measure $\mu$ and $\{ x_{k,2n}, 1  \leq k \leq n \}$
are the zeros of the $n$th orthogonal polynomial for $\mu$. Moreover \eqref{lambdaq} holds for the quadrature weights. Let $g$ satisfy \eqref{gder}, and define $f=g/q_n$, then
\begin{eqnarray*}
    \sum_{k=1}^{\ell-1} \lambda_{k,2n}^{(1)} g(x_{k,2n}) &=& \sum_{k=1}^{\ell-1} \lambda_{k,2n}^{(1)} f(x_{k,2n})q_n(x_{k,2n}) \\
                                                       &=&  \sum_{k=1}^{\ell-1} \lambda_n(x_{k,2n};\mu) f(x_{k,2n}), 
\end{eqnarray*}
so \eqref{PMS1} follows from the classical Poss\'e-Chebyshev-Markov-Stieltjes inequalities \eqref{PCMS} if we can show that
$f$ satisfies \eqref{fder}. By Leibniz' rule
\[   f^{(k)}(x) = \sum_{j=0}^k \binom{k}{j} g^{(k-j)}(x) \frac{d^j}{dx^j} \frac{1}{q_n(x)} . \]
In view of \eqref{gder}, it suffices to show that for all $j \geq 0$ and $x \leq b_1$
\begin{equation}   \label{qnder}
     \frac{d^j}{dx^j} \frac{1}{q_n(x)} \geq 0. 
\end{equation}
This is easily established by induction on $j$. Indeed,
\[   \frac{d}{dx} \frac{1}{q_n(x)} = - \frac{q_n'(x)}{q_n^2(x)} = \frac{1}{q_n(x)} \sum_{k=1}^n \frac{1}{x_{k,2n}-x} > 0, \qquad x \leq b_1, \]
and assuming that \eqref{qnder} holds for $0,1,\ldots,j$, Leibniz's rule applied to the last formula gives
\[   \frac{d^{j+1}}{dx^{j+1}} \frac{1}{q_n(x)} = \sum_{r=0}^j \binom{j}{r} \frac{d^{j-r}}{dx^{j-r}} \frac{1}{q_n(x)}
     \sum_{k=1}^n \frac{r!}{(x_{k,2n}-x)^{r+1}} >0,  \]
for every $x \leq b_1$.
\end{proof}

\begin{corollary}   \label{cor22}
For $2 \leq \ell \leq n-1$ one has
\begin{equation}  \label{PMS2}
        \lambda_{\ell,2n}^{(1)} \leq \int_{x_{\ell-1,2n}}^{x_{\ell+1,2n}} d\mu_1(x), 
\end{equation}
and
\begin{equation}   \label{PMS3}
               \lambda_{\ell,2n}^{(1)} + \lambda_{\ell+1,2n}^{(1)} \geq \int_{x_{\ell,2n}}^{x_{\ell+1,2n}} d\mu_1(x).
\end{equation}
Furthermore
\begin{equation}  \label{PMS4}
        \sum_{k=1}^n \lambda_{k,2n}^{(1)} \leq \int_{a_1}^{b_1} \, d\mu_1(x).
\end{equation}
\end{corollary}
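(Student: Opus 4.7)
The plan is to exploit Theorem \ref{thm:PMS} with the trivial choice $g\equiv 1$, which satisfies \eqref{gder} because all derivatives of order $\ge 1$ vanish identically. For every $1\le m\le n$ this specialization yields the bracketing
\[
    \sum_{k=1}^{m-1} \lambda_{k,2n}^{(1)} \;\le\; \int_{a_1}^{x_{m,2n}} d\mu_1(x) \;\le\; \sum_{k=1}^{m} \lambda_{k,2n}^{(1)} .
\]
Both \eqref{PMS2} and \eqref{PMS3} will then follow by telescoping two such bracketings, which is the analogue of how the classical one-node Chebyshev--Markov--Stieltjes inequalities are extracted from their summed form.

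For \eqref{PMS2}, I would combine the left inequality at $m=\ell+1$, which furnishes an upper bound on $\sum_{k=1}^\ell \lambda_{k,2n}^{(1)}$, with the right inequality at $m=\ell-1$, which gives a lower bound on $\sum_{k=1}^{\ell-1}\lambda_{k,2n}^{(1)}$; subtracting then bounds $\lambda_{\ell,2n}^{(1)}$ by $\int_{x_{\ell-1,2n}}^{x_{\ell+1,2n}} d\mu_1$. For \eqref{PMS3}, I would instead pair the right inequality at $m=\ell+1$ (a lower bound on $\sum_{k=1}^{\ell+1}\lambda_{k,2n}^{(1)}$) with the left inequality at $m=\ell$ (an upper bound on $\sum_{k=1}^{\ell-1}\lambda_{k,2n}^{(1)}$), and again subtract. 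The hypothesis $2\le \ell\le n-1$ is exactly what makes both indices $\ell\pm 1$ admissible in Theorem \ref{thm:PMS}.

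The genuinely nontrivial piece is \eqref{PMS4}: Theorem \ref{thm:PMS} is restricted to $\ell\le n$, and in the extreme case $\ell=n$ its right inequality reads $\int_{a_1}^{x_{n,2n}} d\mu_1 \le \sum_{k=1}^n \lambda_{k,2n}^{(1)}$, which points in the wrong direction. To get around this, I would apply the classical full-sum estimate \eqref{PCMS2} directly to the measure $\mu:=q_n\,d\mu_1$, whose support lies in $[a_1,b_1]$ and therefore avoids the zeros of $q_n$, with test function $f(x)=1/q_n(x)$. The derivative condition $f^{(k)}\ge 0$ for $0\le k\le 2n$ on the support of $\mu$ is exactly what was already verified in the proof of Theorem \ref{thm:PMS}, so \eqref{PCMS2} applies and gives
\[
   \sum_{k=1}^n \frac{\lambda_n(x_{k,2n};\mu)}{q_n(x_{k,2n})} \;\le\; \int_{a_1}^{b_1} \frac{1}{q_n(x)}\, q_n(x)\,d\mu_1(x) = \int_{a_1}^{b_1} d\mu_1(x).
\]
Invoking \eqref{lambdaq} to rewrite the left-hand side as $\sum_{k=1}^n \lambda_{k,2n}^{(1)}$ then yields \eqref{PMS4}. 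The main obstacle is precisely this last step: one has to recognize that Theorem \ref{thm:PMS} cannot reach the right endpoint $b_1$ and that one must step back to the classical \eqref{PCMS2}, using the reciprocal test function $1/q_n$ whose absolute monotonicity on $(-\infty,b_1]$ is the total-positivity fact established earlier.
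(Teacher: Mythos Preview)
Your proposal is correct and matches the paper's proof exactly: the paper also takes $g=1$ in Theorem~\ref{thm:PMS} and subtracts successive inequalities to get \eqref{PMS2}--\eqref{PMS3}, and for \eqref{PMS4} it likewise invokes \eqref{PCMS2} with $f(x)=1/q_n(x)$ and $d\mu=q_n\,d\mu_1$. Your write-up is in fact more explicit than the paper's, correctly spelling out which pairs of inequalities to subtract and why Theorem~\ref{thm:PMS} alone cannot reach \eqref{PMS4}.
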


\begin{proof}
Here we choose $g=1$ and subtract successive inequalities in \eqref{PMS1} in Theorem \ref{thm:PMS} to arrive at \eqref{PMS2}--\eqref{PMS3}.
For \eqref{PMS4} we use \eqref{PCMS2} with $f(x)=1/q_n(x)$ and $d\mu = q_n\, d\mu_1$.
\end{proof}

\subsection{Potential theory}
Suppose that $\mu_1' >0$ almost everywhere on $[a_1,b_1]$ and $\mu_2' >0$ almost everywhere on $[a_2,b_2]$.
The asymptotic distribution of the quadrature nodes $x_{j,2n}$ is given by two probability measures $\nu_1$ and $\nu_2$ which satisfy a vector equilibrium problem in logarithmic potential theory, where $\supp(\nu_1) \subset [a_1,b_1]$ and $\supp(\nu_2) \subset [a_2,b_2]$. They minimize
the logarithmic energy
\[   I(\nu_1,\nu_1) + I(\nu_2,\nu_2) + I(\nu_1,\nu_2), \]
over all probability measures $\nu_1$ with support on $[a_1,b_1]$ and $\nu_2$ with support on $[a_2,b_2]$ (see, e.g., \cite[Chapter 5, \S6]{NikiSor}). 
Here the (mutual) logarithmic energy is given by
\[    I(\nu_i,\nu_j) = \int_{a_i}^{b_i} \int_{a_j}^{b_j} \log \frac{1}{|x-y|} \, d\nu_j(x)\,d\nu_i(y), \qquad 1 \leq i,j \leq 2, \]
The minimization actually describes an interaction between the measures $\nu_1$ and $\nu_2$ where the charge of $\nu_1$ on $[a_1,b_1]$ repels
the charge $\nu_2$ on $[a_2,b_2]$ and vice versa. 
The variational conditions for the potentials
\[   U(x;\nu_1) = \int \log \frac{1}{|x-y|}\, d\nu_1(y), \quad
     U(x;\nu_2) = \int \log \frac{1}{|x-y|}\, d\nu_2(y), \]
are
\begin{equation}  \label{varcon1}
   2 U(x;\nu_1)+ U(x;\nu_2) \ \begin{cases} = \ell_1, & x \in [a_1,b^*], \\
                                            > \ell_1, & x \in (b^*,b_1],   \end{cases}
\end{equation}
and
\begin{equation}   \label{varcon2}
   U(x;\nu_1) + 2 U(x;\nu_2) \ \begin{cases}  = \ell_2, & x \in [a^*,b_2], \\
                                             > \ell_2, & x \in [a_2,a^*),   \end{cases}
\end{equation}
where $\ell_1$ and $\ell_2$ are constants (Lagrange multipliers).
In fact $\ell_1$ and $\nu_1$ determine the $n$th root asymptotic of the orthonormal polynomials on $[a_1,b_1]$ with orthogonality measure
$q_n\,d\mu_1$ and $\ell_2$ and $\nu_2$ determine the $n$th root asymptotic behavior of the orthonormal polynomials on $[a_2,b_2]$ with
orthogonality measure $p_n\, d\mu_2$. The monic orthogonal polynomial of degree $n$ for the weight $q_n\, d\mu_1$ on $[a_1,b_1]$ is equal to the polynomial $p_n$ and the monic orthogonal polynomial of degree $n$ for the weight $p_n\, d\mu_2$ on $[a_2,b_2]$ is $(-1)^n q_n$. Their norms are
\[    \frac{1}{\gamma_n^2(q_n\,d\mu_1)} = \int_{a_1}^{b_1} p_n^2(x)\ q_n(x)\,d\mu_1(x), \quad
      \frac{1}{\gamma_n^2(p_n\,d\mu_2)} = \int_{a_2}^{b_2} q_n^2(x)\ p_n(x)\,d\mu_2(x), \]
and one has \cite[third Corollary on p.~199]{NikiSor}
\begin{equation}  \label{gamma}
    \lim_{n \to \infty} \gamma_n(q_n\,d\mu_1)^{1/n} = e^{\ell_1/2}, \quad
    \lim_{n \to \infty} \gamma_n(p_n\,d\mu_2)^{1/n} = e^{\ell_2/2}.
\end{equation}
The asymptotic distribution of the zeros of $p_n$ is given by $\nu_1$ and the asymptotic distribution of the zeros of $q_n$ by $\nu_2$, i.e.,
for every continuous function $f$ on $[a_1,b_1]$ one has
\begin{equation}   \label{zerodis1}
   \lim_{n \to \infty} \frac{1}{n} \sum_{j=1}^n f(x_{j,2n}) = \int_{a_1}^{b^*} f(x)\, d\nu_1(x), 
\end{equation}
and for every continuous function $g$ on $[a_2,b_2]$ one has
\begin{equation}   \label{zerodis2}
   \lim_{n \to \infty} \frac{1}{n} \sum_{j=n+1}^{2n} g(x_{j,2n}) = \int_{a^*}^{b_2} g(x)\, d\nu_2(x) ,
\end{equation}
(see \cite[Chapter 5, \S6]{NikiSor}).

\subsection{Mhaskar-Rakhmanov-Saff numbers}  \label{sec:MRS}
The support of the measures $\nu_1$ can be a subset $[a_1,b^*]$ of $[a_1,b_1]$ (i.e., $b^* \leq b_1$) and the support of $\nu_2$ can be a subset
$[a^*,b_2]$ of $[a_2,b_2]$ (i.e., $a^* \geq a_2$). In fact only three things are possible \cite[Chapter 5, \S6.5]{NikiSor}:
\begin{itemize}
  \item $b^*=b_1$ and $a^*=a_2$, in which case the measures $\nu_1$ and $\nu_2$ are supported on the full intervals $[a_1,b_1]$ and $[a_2,b_2]$.
  This typically happens when the two intervals are of the same size or the distance between the intervals is big.
  \item $b^*=b_1$ and $a^*>a_2$, in which case $\nu_1$ has support on the full interval $[a_1,b_1]$ but $\nu_2$ on a smaller interval. 
  This typically happens
  when the intervals are close together and $b_1-a_1 < b_2-a_2$. The charge on the smaller interval $[a_1,b_1]$ pushes the charge on the
  larger interval $[a_2,b_2]$ to the right. This has the effect that there will be no zeros of $q_n$ on the interval $[a_2,a^*]$.
  \item $b^*<b_1$ and $a^*=a_2$, in which case $\nu_2$ is supported on the full interval $[a_2,b_2]$ but $\nu_1$ is supported on a smaller
  interval. This typically happens when the intervals are close together and $b_2-a_2 < b_1-a_1$. The effect is similar to the previous case but
  the role of the two intervals is interchanged.
\end{itemize}
The numbers $a_1$ and  $b^*$ are the Mhaskar-Rakhmanov-Saff numbers for the equilibrium distribution $\nu_1$ on $[a_1,b_1]$ with external field
$Q_n = -\frac1n \log q_n$ as $n \to \infty$, and the numbers $a^*$ and $b_2$ are the MRS numbers for the equilibrium distribution $\nu_2$ on $[a_2,b_2]$
with external field  $P_n = -\frac1n \log p_n$ as $n \to \infty$.

\begin{theorem}
For $n \geq 1$ the support of the extremal measure for the external field $Q_n$ is $[a_1,b_n^*]$, where
$b_n^*$ is the unique root in $(a_1,b_1]$ of
\begin{equation}  \label{MRSfin}
  \frac1n \sum_{j=n+1}^{2n} \sqrt{\frac{x_{j,2n}-a_1}{x_{j,2n}-b_n^*}} = 2, 
\end{equation}
or $b_n^* = b_1$ whenever
\[   \frac1n \sum_{j=n+1}^{2n} \sqrt{\frac{x_{j,2n}-a_1}{x_{j,2n}-b_1}} \leq 2. \]
\end{theorem}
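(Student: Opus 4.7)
The plan is to recognize \eqref{MRSfin} as the Mhaskar--Rakhmanov--Saff equation for a free right endpoint applied to the strictly convex external field $Q_n$, and then to evaluate explicitly a single integral whose form reduces the general MRS equation to the stated sum.

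First I would verify the structural hypothesis on $Q_n$. Since each $x_{j,2n}$ with $j \geq n+1$ satisfies $x_{j,2n} > b_1$, the function $Q_n(x) = -\frac1n \log q_n(x) = -\frac1n\sum_{j=n+1}^{2n}\log(x_{j,2n}-x)$ is real-analytic on $(-\infty,x_{n+1,2n})$, and
\[
Q_n'(x) = \frac{1}{n}\sum_{j=n+1}^{2n} \frac{1}{x_{j,2n}-x}>0, \qquad Q_n''(x) = \frac{1}{n}\sum_{j=n+1}^{2n}\frac{1}{(x_{j,2n}-x)^2}>0,
\]
so $Q_n$ is strictly increasing and strictly convex on $[a_1,b_1]$. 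By the standard theory of weighted equilibrium problems on a fixed interval with strictly convex external field (e.g.\ Saff--Totik, Chapter IV), the support of the extremal measure is a single interval $[a_1,b_n^*]$ with $b_n^*\in(a_1,b_1]$: the left endpoint is forced to coincide with $a_1$ because $Q_n$ attains its minimum there, while the right endpoint is either the free MRS number or is pinned at $b_1$.

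Next I would invoke the MRS characterization of $b_n^*$ for a fixed left endpoint. When $b_n^*<b_1$, the mass condition for the extremal density takes the form
\[
\frac{1}{\pi}\int_{a_1}^{b_n^*} \frac{Q_n'(t)\sqrt{t-a_1}}{\sqrt{b_n^*-t}}\,dt \;=\; 1,
\]
and when $b_n^*=b_1$ the same integral (with $b_n^*$ replaced by $b_1$) is $\leq 1$. The proof reduces to substituting $Q_n'$ and computing
\[
J(c)\;=\;\frac{1}{\pi}\int_{a_1}^{b_n^*}\frac{\sqrt{t-a_1}}{(c-t)\sqrt{b_n^*-t}}\,dt,\qquad c>b_n^*.
\]
The substitution $u=\sqrt{(t-a_1)/(b_n^*-t)}$ converts the integrand into a rational function of $u$, and after a partial-fraction decomposition the two resulting elementary integrals evaluate to
\[
J(c)\;=\;\sqrt{\frac{c-a_1}{c-b_n^*}}-1.
\]
Summing the identity $J(x_{j,2n})$ over $j=n+1,\ldots,2n$ and dividing by $n$ turns the MRS equation into
\[
\frac{1}{n}\sum_{j=n+1}^{2n}\left[\sqrt{\frac{x_{j,2n}-a_1}{x_{j,2n}-b_n^*}}-1\right]=1,
\]
which is exactly \eqref{MRSfin}, and the boundary inequality becomes the alternative condition stated in the theorem.

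Finally I would argue uniqueness and the dichotomy between the two cases. For fixed $j$ the map $b^*\mapsto\sqrt{(x_{j,2n}-a_1)/(x_{j,2n}-b^*)}$ is strictly increasing on $[a_1,b_1]$ (recall $x_{j,2n}>b_1\geq b^*$), so the left-hand side of \eqref{MRSfin} is strictly increasing and continuous in $b^*\in[a_1,b_1]$. At $b^*=a_1$ every summand equals $1$, so the LHS equals $1<2$; thus \eqref{MRSfin} has a unique root in $(a_1,b_1]$ precisely when the LHS at $b^*=b_1$ is $\geq 2$, and in the complementary case the support saturates, forcing $b_n^*=b_1$. The only genuinely nontrivial step is the evaluation of $J(c)$; everything else is either a monotonicity check or a quotation of the MRS framework. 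I expect the main obstacle to be pinning down the precise reference/form of the MRS equation for the constrained problem with one fixed endpoint, since conventions differ between sources.
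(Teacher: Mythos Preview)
Your proposal is correct and follows essentially the same route as the paper: both compute $Q_n',Q_n''>0$ to force the support to be $[a_1,b_n^*]$, invoke the Mhaskar--Rakhmanov--Saff equation $\frac{1}{\pi}\int_{a_1}^{b_n^*} Q_n'(t)\sqrt{(t-a_1)/(b_n^*-t)}\,dt=1$ for the free right endpoint, evaluate the resulting integral to $\sqrt{(x_{j,2n}-a_1)/(x_{j,2n}-b_n^*)}-1$, and finish with the monotonicity argument. The only cosmetic difference is that the paper evaluates the integral via the classical identity $\frac{1}{\pi}\int_{-1}^{1}\frac{dx}{(z-x)\sqrt{1-x^2}}=\frac{1}{\sqrt{z^2-1}}$ rather than your substitution $u=\sqrt{(t-a_1)/(b_n^*-t)}$.
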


\begin{proof}
Let us examine the MRS numbers $a,b$ for the interval $[a_1,b_1]$ in more detail. For the external field $Q_n(x) = -\frac1n \log q_n(x)$ we have
\[     q_n(x) = e^{-nQ_n(x)}, \qquad x \in [a_1,b_1]. \]
We will use \cite[Thm. IV.1.11 on p.~201]{SaffTotik}, and observe that
\[   Q_n'(x) = \frac1n \sum_{j=n+1}^{2n} \frac{1}{x_{j,2n}-x}, \quad Q_n''(x) = \frac1n \sum_{j=n+1}^{2n} \frac{1}{(x_{j,2n}-x)^2} > 0. \]
If $b < b_1$, then one has
\begin{equation}   \label{MRS1}
   \frac{1}{\pi} \int_{a}^{b} Q_n'(x) \sqrt{\frac{x-a}{b-x}} \, dx = 1; 
\end{equation}
if $a > a_1$ then
\begin{equation}   \label{MRS2}
   \frac{1}{\pi}  \int_{a}^{b} Q_n'(x) \sqrt{\frac{b-x}{x-a}}\, dx = -1. 
\end{equation}
In our case $Q_n' >0$ throughout the interval $[a_1,b_1]$, so \eqref{MRS2} can never happen, hence necessarily $a=a_1$.
So we only need to consider \eqref{MRS1}, which becomes
\[    \frac{1}{\pi} \int_{a_1}^{b} Q_n'(x) \sqrt{\frac{x-a_1}{b-x}}\, dx = 1, \]
and if we use the explicit form of the external field $Q_n$, then we have
\begin{equation}   \label{MRS}
    \frac1n \sum_{j=n+1}^{2n} \frac1{\pi}  \int_{a_1}^{b} \frac{1}{x_{j,2n}-x} \sqrt{\frac{x-a_1}{b-x}}\, dx = 1.  
\end{equation}
Now we use the standard identity (obtained by differentiation of the standard equilibrium potential relation for $[-1,1]$)
\[   \frac{1}{\pi} \int_{-1}^1 \frac{1}{z-x} \frac{dx}{\sqrt{1-x^2}} = \frac{1}{\sqrt{z^2-1}}, \qquad  z \in \mathbb{C} \setminus [-1,1], \]
which mapped from $[-1,1]$ to $[a_1,b]$ becomes
\[   \frac{1}{\pi} \int_{a_1}^{b} \frac{1}{z-x} \frac{dx}{\sqrt{(x-a_1)(b-x)}} = \frac{1}{\sqrt{(z-a_1)(z-b)}}, \quad
     z \in \mathbb{C} \setminus [a_1,b].  \]
Then
\begin{eqnarray*}
     \frac{1}{\pi} \int_{a_1}^{b} \frac{1}{x_{j,2n}-x} \sqrt{\frac{x-a_1}{b-x}}\, dx
    &=& \frac{1}{\pi} \int_{a_1}^{b} \frac{x-a_1}{x_{j,2n}-x} \frac{1}{\sqrt{(x-a_1)(b-x)}}\, dx \\
    &=& \frac{1}{\pi} \int_{a_1}^{b} \left( -1 + \frac{x_{j,2n}-a_1}{x_{j,2n}-x} \right) \frac{1}{\sqrt{(x-a_1)(b-x)}}\, dx \\
    &=& -1 + \frac{x_{j,2n}-a_1}{\sqrt{(x_{j,2n}-a_1)(x_{j,2n}-b)}} .
\end{eqnarray*}
Using this in \eqref{MRS} gives
\[   \frac1n \sum_{j=n+1}^{2n} \left( -1 + \frac{x_{j,2n}-a_1}{\sqrt{(x_{j,2n}-a_1)(x_{j,2n}-b)}} \right) = 1, \]
from which we find
\[     \frac{1}{n} \sum_{j=n+1}^{2n} \sqrt{\frac{x_{j,2n}-a_1}{x_{j,2n}-b}} = 2.   \]
The left hand side is an increasing function of $b$ that increases from $1$ at $b=a_1$ to $+\infty$ at $b = x_{n+1,2n}$,
hence there must be a $b \in (a_1,x_{n+1,2n})$ so that \eqref{MRSfin} holds. If this is a number $\geq b_1$ then the
Mhaskar-Rakhmanov-Saff number is $b_n^*=b_1$, otherwise the root is $b_n^* < b_1$.
\end{proof} 

Naturally a similar result also holds for the Mhaskar-Rakhmanov-Saff numbers $[a_n^*,b_2]$ for the extremal measure for the
external field $P_n$ on $[a_2,b_2]$.
If $n \to \infty$ and the zeros $\{x_{j,2n}, n+1 \leq j \leq 2n\}$ are asymptotically distributed according to the measure $\nu_2$
as in \eqref{zerodis2}, then $b_n^* \to b^*$, where $b^*$ is the root in $(a_1,b_1]$ of
\[    \int_{a_2}^{b_2} \sqrt{\frac{x-a_1}{x-b^*}} \, d\nu_2(x) = 2, \]
or $b^* = b_1$ when
  \[    \int_{a_2}^{b_2} \sqrt{\frac{x-a_1}{x-b_1}} \, d\nu_2(x) \leq 2. \]

\subsection{Estimates}

Some results about the quadrature weights are already known. Kalyagin \cite[Prop. on p.~578]{Kalyagin} proved for $[a_1,b_1]=[-1,0]$ and $[a_2,b_2]=[0,1]$
and uniform measures (Legendre type weights) $\mu_1,\mu_2$ on both intervals that for $x_{j,2n} \in [\delta,1-\delta]$ ($1 \leq j \leq n$)
\[   \frac{c_1}{n} \leq |\lambda_{j,2n}^{(2)}| \leq \frac{c_2}{n}, \]
and for $x_{j,2n} \in [-1+\delta,-\delta]$ ($n+1 \leq j \leq 2n$)
\[   \frac{c_3}{n} \frac{|u_1(y_{j,2n})|^n}{|u_2(y_{j,2n})|^n} \leq |\lambda_{j,2n}^{(2)}| \leq \frac{c_4}{n} \frac{|u_1(y_{j,2n})|^n}{|u_2(y_{j,2n})|^n}, \]
where $c_1,c_2,c_3,c_4$ are constants (depending on $\delta >0$) and $u_1$ and $u_2$ are certain solutions of the cubic equation
\[   (u+1)^3 - \frac{27(u+1)}{4y^2} + \frac{27}{4y^2} = 0, \]
and $y_{j,2n} = 1/x_{j,2n}$. We will prove similar results in a more general setting.

The following simple bounds for the quadrature weights $\{\lambda_{j,2n}^{(1)}, 1 \leq j \leq n \}$ for the quadrature nodes on the first
interval $[a_1,b_1]$ are given by:

\begin{proposition} \label{prop23}    \ 
\begin{enumerate}
  \item[(a)] For $1 \leq j \leq n$ one has
  \begin{equation}   \label{Lub31}
      \lambda_{j,2n}^{(1)} \geq \lambda_m(x_{j,2n};\mu_1),
  \end{equation} 
   where $m = \lceil \frac{3n}{2} \rceil$ is the least integer $\geq 3n/2$, and $\lambda_m(x;\mu_1)$ is the Christoffel function for the
   measure $\mu_1$ on $[a_1,b_1]$.
  \item[(b)] If $J_1$ is a closed subinterval of $(a_1,b_1)$ and $\mu_1$ is absolutely continuous in an open neighborhood of $J_1$, while
  $\mu_1'$ is bounded from below by a positive constant there, then for some $C_1 >0$, independent of $j$ and $n$, we have
 \begin{equation}   \label{Lub32}
       \lambda_{j,2n}^{(1)} \geq \frac{C_1}{n}, 
 \end{equation} 
  whenever $x_{j,2n} \in J_1$.
\end{enumerate}
\end{proposition}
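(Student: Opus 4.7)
The strategy for part (a) is to use the variational characterization of the Christoffel function,
$$
\lambda_m(x_{j,2n};\mu_1) = \min_{R \in \mathbb{P}_{m-1},\,R(x_{j,2n})=1}\, \int_{a_1}^{b_1} R^2\,d\mu_1,
$$
and exhibit one admissible $R$ whose squared $L^2(\mu_1)$ norm is at most $\lambda_{j,2n}^{(1)}$. Take $R = \ell_{j,n}^{(1)} \cdot W$, where $\ell_{j,n}^{(1)}$ is the degree-$(n-1)$ Lagrange fundamental polynomial at $\{x_{1,2n},\ldots,x_{n,2n}\}$ already used in the paper, and $W \in \mathbb{P}_{\lceil n/2 \rceil}$ is to be chosen with $W(x_{j,2n}) = 1$. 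A direct count gives $\deg R = (n-1) + \lceil n/2 \rceil = \lceil 3n/2\rceil - 1 = m-1$ for every $n$, hence $\deg R^2 \leq 2m-2 \leq 3n-1$, so the simultaneous quadrature rule \eqref{quadrature1} is exact on $R^2$.

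The crux is to choose $W$ to exploit the alternating-sign pattern of $\{\lambda_{n+s,2n}^{(1)}\}_{s=1}^n$ recorded just after Theorem~\ref{thm1}: since $\lambda_{n+1,2n}^{(1)}>0$ and the signs alternate, positivity occurs precisely at odd $s\in\{1,\ldots,n\}$, of which there are exactly $\lceil n/2 \rceil$, matching the degree budget for $W$. Set
$$
W(x) = \prod_{\substack{1 \leq s \leq n\\ s\ \text{odd}}} \frac{x - x_{n+s,2n}}{x_{j,2n} - x_{n+s,2n}},
$$
so that $W$ annihilates every positive-weight node on $[a_2,b_2]$. Applying exact quadrature to $R^2$, $\ell_{j,n}^{(1)}$ kills the first-interval nodes $k \neq j$ and $W$ kills the positive-weight second-interval nodes, leaving
$$
\int_{a_1}^{b_1} R^2\,d\mu_1 = \lambda_{j,2n}^{(1)} + \sum_{\substack{1\leq s\leq n\\ s\ \text{even}}} \lambda_{n+s,2n}^{(1)}\, R^2(x_{n+s,2n}),
$$
in which every remaining $\lambda_{n+s,2n}^{(1)}$ is negative while $R^2 \geq 0$. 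Hence $\int R^2\,d\mu_1 \leq \lambda_{j,2n}^{(1)}$, which proves \eqref{Lub31}.

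For part (b), I would combine (a) with the classical lower bound for Christoffel functions: under the hypothesis $\mu_1 \geq c\,dx$ on an open neighborhood $U$ of $J_1$, one has $\int R^2\,d\mu_1 \geq c\int_U R^2\,dt$ for every polynomial $R$, and the standard interior estimate $\lambda_m(x;dx|_U) \geq C/m$ uniformly on compact subintervals of $U$ then yields $\lambda_m(x;\mu_1) \geq C_1/m$ uniformly on $J_1$. Since $m = \lceil 3n/2\rceil \leq 2n$, part (a) gives $\lambda_{j,2n}^{(1)} \geq C_1/n$.

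The main conceptual obstacle is simply spotting the correct $W$: once one notices that the arithmetic gap $m - n = \lceil n/2 \rceil$ between the degree budget $m-1$ and $\deg \ell_{j,n}^{(1)}$ matches exactly the count of positive-sign weights on $[a_2,b_2]$, the proof collapses to the one-line quadrature identity above, without any need for a pointwise half-degree square-root approximation of $q_n$ or any potential theory.
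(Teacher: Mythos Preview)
Your proof of (a) is correct and genuinely different from the paper's. The paper argues via the identity $\lambda_{j,2n}^{(1)} q_n(x_{j,2n}) = \lambda_{k,n}(q_n\,d\mu_1)$: it writes $\lambda_{j,2n}^{(1)}$ as an infimum of $\int Pq_n\,d\mu_1 / (Pq_n)(x_{j,2n})$ over nonnegative $P\in\mathbb{P}_{2n-2}$, then enlarges the class $\{Pq_n\}$ to all nonnegative $R\in\mathbb{P}_{3n-2}$ on $[a_1,b_1]$, and finally identifies that last infimum with $\lambda_m(x_{j,2n};\mu_1)$ via the Lukacs-type characterization of Christoffel functions. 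No sign information about the second-interval weights is used. Your argument instead builds one explicit test polynomial $R=\ell_{j,n}^{(1)}W$, with $W$ killing exactly the $\lceil n/2\rceil$ positive-weight nodes on $[a_2,b_2]$, and reads off $\int R^2\,d\mu_1\le \lambda_{j,2n}^{(1)}$ from the quadrature identity. The degree bookkeeping ($\deg R=m-1$, $\deg R^2\le 3n-1$) and the sign pattern $\operatorname{sign}\lambda_{n+k,2n}^{(1)}=(-1)^{k-1}$ are exactly as stated in the paper, so the argument is sound. What your route buys is a fully constructive witness and avoidance of the nonnegative-polynomial extremal characterization; what the paper's route buys is that it never touches the second-interval weights at all, so it would go through verbatim even without the precise sign pattern. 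Your closing remark that the paper needs ``a pointwise half-degree square-root approximation of $q_n$ or potential theory'' is off: the paper's proof of (a) is equally elementary, just organized around a different extremal principle.

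For (b) both arguments coincide: part (a) plus the standard interior lower bound $\lambda_m(x;\mu_1)\ge C/m$ under $\mu_1'\ge c>0$ near $J_1$.
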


\begin{proof}
(a) With $\mu$ the measure for which $d\mu = q_n\, d\mu_1$, we know that \eqref{lambdaq} holds. By the usual extremal property of Christoffel functions
\[      \lambda_{j,2n}^{(1)} q_n(x_{j,2n}) = \inf_{\deg(P) \leq 2n-2, P\geq 0 \textrm{ in } [a_1,b_1]}
          \frac{ \int_{a_1}^{b_1} P(x)q_n(x)\, d\mu_1(x)}{P(x_{j,2n})},  \]
so that
\begin{eqnarray*}
       \lambda_{j,2n}^{(1)} &=& \inf_{\deg(P) \leq 2n-2, P\geq 0 \textrm{ in } [a_1,b_1]}
          \frac{ \int_{a_1}^{b_1} P(x)q_n(x)\, d\mu_1(x)}{P(x_{j,2n})q_n(x_{j,2n})}  \\
                            &\geq&  \inf_{\deg(R) \leq 3n-2, R \geq 0 \textrm{ in } [a_1,b_1]}
          \frac{ \int_{a_1}^{b_1} R(x)\, d\mu_1(x) }{ R(x_{j,2n})} \\
                            &\geq & \lambda_m(x_{j,2n};\mu_1), 
\end{eqnarray*}
where $m$ is the least integer $\geq 3n/2$.

(b) The stated hypotheses on $\mu_1$ guarantee that uniformly for $m \geq 1$ and $x \in J_1$
\[    \lambda_m(x;\mu_1) \geq \frac{C}{m}, \]
(see \cite[Thm.~III.3.3 on p.~104]{Freud}). Then the result follows from (a).                         
\end{proof}  

Next, we present an asymptotic upper bound under the assumption that $-\frac{1}{n}\log q_{n}$ is an external field with appropriate asymptotic behavior.
We will use Totik's Theorem 8.3 \cite[p. 52]{TotikLN} on weighted
polynomial approximation. For the sake of completeness, Totik's theorem is the following
\begin{theorem}[Totik]  \label{thm:Totik}
Suppose that $(w_n)_n$ are weights, $w_n=\exp(-Q_n)$, and that the support of the equilibrium measure $S_{w_n}=\textup{supp}(\mu_{w_n})$ is $[0,1]$
for all $n$. Furthermore assume that on every closed subinterval $[a,b] \subset (0,1)$ the functions $w_n$  are uniformly of class $C^{1+\epsilon}$ for some $\epsilon >0$ that may depend on $[a,b]$, and the functions $tQ_n'(t)$ are nondecreasing on $(0,1)$ and there are points $0 < c < d < 1$ and $\eta >0$
such that $dQ_n'(d) \geq c Q_n'(c) + \eta$ for all $n$.
Then every continuous function that vanishes outside $(0,1)$ can be uniformly approximated on $[0,1]$ by weighted polynomials $w_n^n P_n$, where
$\deg P_n \leq n$.
\end{theorem}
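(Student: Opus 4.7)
The plan is to follow the standard architecture for weighted polynomial approximation with varying weights, which was developed by Totik himself in his monograph: first reduce the theorem to the existence of ``fast decreasing'' weighted polynomials localized near interior points, then assemble a global approximation through a partition-of-unity argument. Since every continuous function vanishing outside $(0,1)$ can be uniformly approximated by functions supported in a fixed closed subinterval $[\alpha,\beta] \subset (0,1)$, I would first reduce to approximating such compactly supported $f$.

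The central construction is then, for each $x_{0} \in [\alpha,\beta]$ and each $\delta > 0$, a polynomial $R_{n,x_{0},\delta}$ of degree at most $n$ satisfying three properties uniformly in $n$ and locally uniformly in $x_{0}$: namely $(w_{n}^{n} R_{n,x_{0},\delta})(x_{0}) \geq c > 0$, the sup-norm bound $\|w_{n}^{n} R_{n,x_{0},\delta}\|_{L^{\infty}[0,1]} \leq C$, and the exponential decay $\|w_{n}^{n} R_{n,x_{0},\delta}\|_{L^{\infty}([0,1] \setminus (x_{0}-\delta,x_{0}+\delta))} \leq e^{-\eta n}$ for some $\eta > 0$. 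I would construct $R_{n,x_{0},\delta}$ by discretizing (a suitably scaled version of) the equilibrium measure $\mu_{w_{n}}$ restricted to $[0,1] \setminus (x_{0}-\delta,x_{0}+\delta)$ into roughly $n$ points $\{t_{k,n}\}$ and taking $R_{n,x_{0},\delta}(x) = \prod_{k}(x-t_{k,n})$. The variational equality for $\mu_{w_{n}}$ combined with a Riemann-sum estimate of $-\frac{1}{n}\log|R_{n,x_{0},\delta}|$ by the potential $U^{\mu_{w_{n}}}$ gives the uniform upper bound, while the absence of nodes near $x_{0}$ produces strict inequality outside the $\delta$-neighborhood, hence exponential decay there.

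With these local building blocks in hand, the assembly is routine: choose a mesh $\alpha = y_{0} < y_{1} < \cdots < y_{M} = \beta$ of spacing $< \delta$, normalize so that $\sum_{j} w_{n}^{n} R_{n,y_{j},\delta}$ is close to $1$ on the support of $f$, and form $P_{n}(x) = \sum_{j} f(y_{j}) R_{n,y_{j},\delta}(x)$ divided by this normalization. The continuity of $f$ together with the exponential decay of each $R_{n,y_{j},\delta}$ outside its own small interval then yields uniform approximation of $f$ on $[0,1]$ by $w_{n}^{n} P_{n}$, since $f$ is zero at the endpoints and $w_{n}^{n}P_{n}$ is automatically small there (this last fact is the $L^{\infty}$ Mhaskar inequality applied to the varying weight).

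The main obstacle, and the reason the hypotheses in the theorem are nontrivial, is obtaining all the above estimates \emph{uniformly in $n$} despite the weights $w_{n}$ genuinely varying with $n$. The hypothesis that $w_{n}$ is uniformly of class $C^{1+\epsilon}$ on closed subintervals of $(0,1)$ is what ensures that the density of $\mu_{w_{n}}$ has uniform upper and lower bounds on such subintervals, which in turn is what makes the discretization of $\mu_{w_{n}}$ into approximately equally-spaced nodes work with uniform constants. The monotonicity of $tQ_{n}'(t)$ together with the strict inequality $dQ_{n}'(d) \geq cQ_{n}'(c) + \eta$ is what prevents the equilibrium support from shrinking to a proper subinterval of $[0,1]$ in some limit, via the Mhaskar--Rakhmanov--Saff equations analogous to those derived in Section \ref{sec:MRS} of the present paper; without this, the construction could fail in a region where the equilibrium measure has asymptotically no mass. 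Verifying that these analytic hypotheses deliver the requisite uniform density bounds on $\mu_{w_{n}}$ through the Euler--Lagrange equations of the varying equilibrium problem is, I expect, the most delicate step, and is precisely the content of Totik's technical machinery.
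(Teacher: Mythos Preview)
The paper does not prove this theorem at all: it is quoted verbatim from Totik's monograph \cite[Thm.~8.3, p.~52]{TotikLN} and used as a black box in the proofs of Proposition~\ref{prop32} and Theorem~\ref{thm:31}. So there is no ``paper's own proof'' to compare your proposal against.

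That said, your sketch is a faithful outline of the architecture Totik actually uses in \cite{TotikLN}: reduction to compactly supported $f$, construction of fast-decreasing weighted polynomials by discretizing the (varying) equilibrium measure, and assembly via a partition of unity. You have also correctly identified the role of each hypothesis --- the uniform $C^{1+\varepsilon}$ regularity for uniform density bounds on $\mu_{w_n}$, and the monotonicity condition on $tQ_n'(t)$ together with the gap $dQ_n'(d)\geq cQ_n'(c)+\eta$ to keep the extremal support pinned at $[0,1]$ uniformly in $n$. For the purposes of this paper, however, you are not expected to reproduce that argument; you should simply cite the result and move on.
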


By means of this theorem we can obtain the following result.

\begin{proposition} \label{prop32}  \
\begin{enumerate}
\item[(a)] Suppose $b_1 < a_2$ and choose $r\in (b_1,a_2)$. For $n\geq 1$ define $w_n$ on $[a_{1},r]$ by 
\begin{equation*}
w_{n}( x) =q_{n}( x)^{1/n},
\end{equation*}
and the external field $Q_{n}$ by 
\begin{equation*}
Q_{n}(x)=-\frac{1}{n}\log q_{n}(x) .
\end{equation*}
Assume that for large enough $n$ the extremal support of $w_{n}=q_{n}^{1/n}$  is 
$[ a_{1},b_{n}^*]$, where 
\begin{equation*}
\lim_{n\to \infty }b_{n}^*=b^*.
\end{equation*}
Then uniformly for $x_{j,2n}$ in compact subsets of $(a_{1},b^*)$ we have 
\begin{equation}  \label{Lub33}
\lambda _{j,2n}^{(1)} \leq \Bigl( 1+ o( 1) \Bigr)
\lambda _{[ \frac{n}{2}] }(x_{j,2n};\mu_1) .
\end{equation}
\item[(b)] If in addition, we assume that $J_{1}$ is a closed
subinterval of $(a_{1},b^*)$  and $\mu_{1}$  is absolutely continuous in an open neighborhood of $J_{1}$
while $\mu _{1}'$  is bounded above by a positive constant there, then for some $C_{2}>0$, independent of 
$j$  and $n$, we have 
\begin{equation}  \label{Lub34}
\lambda _{j,2n}^{(1)}\leq \frac{C_{2}}{n},
\end{equation}
whenever $x_{j,2n}\in J_{1}.$
\end{enumerate}
\end{proposition}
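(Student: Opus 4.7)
The idea for (a) is to use a Gauss--quadrature extremal representation for $\lambda_{j,2n}^{(1)}$, combined with a test polynomial factored into a ``base'' piece for $\mu_1$ and a ``weight'' piece for $q_n$. Starting from \eqref{lambdaq} and the extremal characterization at a Gaussian quadrature node (compare the proof of Proposition~\ref{prop23}),
\[
q_n(x_{j,2n})\,\lambda_{j,2n}^{(1)} = \inf\!\left\{\frac{\int_{a_1}^{b_1} R(x)\,q_n(x)\,d\mu_1(x)}{R(x_{j,2n})}\, :\, R\in\mathbb{P}_{2n-2},\ R\ge 0\text{ on }[a_1,b_1]\right\}.
\]
With $m := [n/2]$, take $R(x) := S(x)^2\,V_n(x)$, where $S$ realizes the Christoffel minimum for $\mu_1$ at level $m$ (so $\deg S\le m-1$, $S(x_{j,2n})=1$, $\int S^2\,d\mu_1 = \lambda_m(x_{j,2n};\mu_1)$), and $V_n\ge 0$ is a polynomial of degree $\le n$ from weighted polynomial approximation. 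Then $\deg R\le 2m-2+n\le 2n-2$, $R\ge 0$ on $[a_1,b_1]$, and $R(x_{j,2n}) = V_n(x_{j,2n})$.

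To construct $V_n$, fix a compact $K\subset(a_1,b^*)$ containing the $x_{j,2n}$ of interest and a continuous $\Phi\colon[a_1,r]\to[0,1]$ with $\Phi\equiv 1$ on an open neighborhood of $K$ and $\supp\Phi$ a closed subinterval of $(a_1,b^*)$ bounded away from $b^*$; since $b_n^*\to b^*$, $\supp\Phi\subset(a_1,b_n^*)$ for $n$ large. Applying Theorem~\ref{thm:Totik} to $w_n=q_n^{1/n}$ (after the affine rescaling $[a_1,b_n^*]\to[0,1]$), together with a standard non-negative-approximant variant of the theorem, yields polynomials $V_n\ge 0$, $\deg V_n\le n$, with
\[
\sup_{x\in[a_1,b_n^*]}\bigl|q_n(x)V_n(x)-\Phi(x)\bigr|=o(1),
\]
and the Mhaskar--Saff principle extends the bound $q_nV_n\le 1+o(1)$ to all of $[a_1,b_1]$. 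Totik's hypotheses are met because $q_n$ has its zeros in $[a_2,b_2]$, so $Q_n(x)=-\frac1n\log q_n(x)$ is $C^\infty$ on $[a_1,r]$ with derivatives uniform in $n$ via \eqref{zerodis2}; $Q_n'(x)=\frac1n\sum_{j=n+1}^{2n}(x_{j,2n}-x)^{-1}>0$ and $Q_n''>0$, so (after rescaling) $tQ_n'(t)$ is strictly increasing, and the separation $dQ_n'(d)\ge cQ_n'(c)+\eta$ uniformly in large $n$ follows from $Q_n'$ converging to the strictly monotone, strictly positive limit $\int(y-\cdot)^{-1}\,d\nu_2(y)$ on compact subsets of $(a_1,b^*)$. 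Since $q_n(x_{j,2n})V_n(x_{j,2n})=1+o(1)$ and $q_nV_n\le 1+o(1)$ on $[a_1,b_1]$,
\[
q_n(x_{j,2n})\,\lambda_{j,2n}^{(1)} \le \frac{\int S^2\,(q_nV_n)\,d\mu_1}{V_n(x_{j,2n})} \le \frac{(1+o(1))\int S^2\,d\mu_1}{(1+o(1))/q_n(x_{j,2n})} = (1+o(1))\,q_n(x_{j,2n})\,\lambda_m(x_{j,2n};\mu_1);
\]
dividing by $q_n(x_{j,2n})$ gives \eqref{Lub33}, with uniformity in $x_{j,2n}\in K$ since $\Phi$ is independent of $j$ and $n$.

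For (b), the added hypothesis that $\mu_1'\le M$ near $J_1$ gives the classical upper bound $\lambda_m(x;\mu_1)\le C/m$ uniformly for $x\in J_1$ (see \cite[Thm.~III.3.3 on p.~104]{Freud}), and combining with (a) and $m=[n/2]$ yields \eqref{Lub34}. The main technical challenges are the uniform-in-$n$ verification of Theorem~\ref{thm:Totik}'s hypotheses for the moving weights $w_n=q_n^{1/n}$ (especially the strict separation of $t\mapsto tQ_n'(t)$) and the production of a \emph{non-negative} polynomial approximant $V_n$ of degree $\le n$, since Theorem~\ref{thm:Totik} only yields a signed approximant as stated; this latter difficulty is standardly overcome by a non-negative-approximation companion from Totik's monograph or by the $\sqrt\Phi$-and-square device.
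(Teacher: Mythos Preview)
Your proof follows essentially the same route as the paper's: both start from the extremal characterization
\[
\lambda_{j,2n}^{(1)}=\inf_{\substack{\deg P\le 2n-2\\ P\ge 0\text{ on }[a_1,b_1]}}\frac{\int P\,q_n\,d\mu_1}{(Pq_n)(x_{j,2n})},
\]
use Totik's Theorem~\ref{thm:Totik} (after the affine change $[a_1,b_n^*]\to[0,1]$, verified via $Q_n'>0$, $Q_n''>0$ and uniform bounds on derivatives) to produce a polynomial $R_n$ of degree $\le n$ with $R_nq_n\to\Phi$ uniformly, plug in $P=S^2R_n$ with $\deg S\le [n/2]-1$, and reduce to $\lambda_{[n/2]}(x_{j,2n};\mu_1)$; part~(b) then follows from the standard Christoffel-function upper bound in Freud (the paper cites Lemma~III.3.2 rather than Thm.~III.3.3, but either suffices).

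You are in fact more careful on one point that the paper leaves implicit: for the substitution $P\mapsto S^2V_n$ to be admissible in the infimum one needs $V_n\ge 0$ on $[a_1,b_1]$, whereas Totik's theorem as stated produces only a signed approximant. Your remark that this is handled either by a non-negative companion result in Totik's monograph or by the $\sqrt{\Phi}$-and-square device is the right observation; just be aware that the naive squaring route must be set up so that the resulting degree stays $\le n$ (i.e.\ apply the approximation theorem at level $\lfloor n/2\rfloor$ to $\sqrt{\Phi}$ with the corresponding weight), which is a small bookkeeping step rather than a genuine obstacle.
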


\begin{proof}
(a) We apply Totik's Theorem \ref{thm:Totik}. Now
\begin{equation*}
Q_{n}(x) = -\frac{1}{n}\log q_{n}(x) =-\frac{1}{n} \sum_{j=n+1}^{2n}\log ( x_{j,2n}-x) .
\end{equation*}%
Then for $x \in [a_1,b_1]$, 
\begin{eqnarray*}
Q_{n}'(x)  &=&\frac{1}{n}\sum_{j=n+1}^{2n}\frac{1}{x_{j,2n}-x}>0 ; \\
Q_{n}''(x)  &=&\frac{1}{n}\sum_{j=n+1}^{2n}\frac{1}{( x_{j,2n}-x)^{2}}>0.
\end{eqnarray*}
Moreover, we see that in $[a_1,b_1]$, 
\begin{equation*}
Q_{n}''(x) \leq \left( \frac{1}{b_2-a_1 }\right) ^{2}.
\end{equation*}
Thus $\{ Q_{n}''\} $ are uniformly bounded in $[a_1,b_1]$. Our hypothesis is that the external field $Q_{n}$ has support $[a_1,b_n^*]$
where $b_n^*\to b^*$ as $n\to \infty$. Let $L_{n}(t)$ denote the linear map of $[0,1]$ onto $[a_{1},b_n^*]$ for $n\geq 1$.
Then the external field $Q_{n}\circ L_{n}$ has support $[0,1]$.
This follows, for example, from \cite[Thm. I.3.3, p.~44]{SaffTotik}.
Moreover, as $n\to \infty$, $L_{n}$ converges to the linear map of $[0,1]$ onto $[a_{1},b^*]$. 

Next let $\varepsilon >0$ be a small positive number and $f=1$ in $[\varepsilon ,1-\varepsilon]$, 
while $f$ is a linear function on $[0,\varepsilon]$ with value $0$ at $0$ and $1$ at $\varepsilon$. 
Similarly, let $f$ be a linear function on $[1-\varepsilon ,1]$ 
with value $1$ at $1-\varepsilon $ and $0$ at $1$. By Totik's theorem,
applied to the external fields $\{ Q_{n}\circ L_{n}\}$, there
exist polynomials $R_{n}$ of degree $\leq n$ such that uniformly for $x\in [0,1]$, 
\begin{equation*}
\lim_{n\to \infty }R_{n}(x) q_{n}\bigl(L_{n}(x)\bigr) =f(x) .
\end{equation*}
Note too, that if $1<r<\lim_{n\to\infty} L_{n}(a_{2})$,
then for large enough $n$, $q_{n}\bigl( L_{n}(x) \bigr)$ is
defined on $[a_{1},r]$ and is convex, so uniformly in this interval, 
\begin{equation*}
   \left| R_{n}( x) q_{n}\bigl( L_{n}(x) \bigr) \right|
   \leq \sup_{[0,1]} f+o(1) = 1+o(1) .
\end{equation*}
Here we are using the majorization principle in Theorem II.2.1 in \cite[p.~153]{SaffTotik} and the fact that the weight on this extended interval
has the same extremal support. Now let 
\begin{equation*}
R_{n}^*(y) =R_{n} \bigl( L_{n}^{[-1]}(y) \bigr) ,
\end{equation*}
where $L_{n}^{[-1]}$ is the inverse map of $L_{n}$. We have
uniformly for $y\in [a_1,b_1]$, 
\begin{equation*}
   |R_{n}^*(y) q_{n}(y)| \leq 1+o( 1) ,
\end{equation*}
while, if we remove small intervals about the endpoints of $[a_{1},b^*]$, then uniformly for $y$ in the resulting
interval, 
\begin{equation}   \label{Lub2.21}
\lim_{n\to \infty }R_{n}^*(y) q_{n}(y) =1.
\end{equation}
Given a compact subset $J_{1}$ of $(a_{1},b^*)$, we may
assume that $\varepsilon$ above is so small that uniformly for $y\in J_{1}$,
 we have \eqref{Lub2.21}. Then for $x_{j,2n}\in J_{1}$, 
\begin{eqnarray*}
\lambda _{j,2n}^{(1)} &=&\inf_{\substack{ \deg(P) \leq 2n-2, \\ P\geq 0\text{ in } [a_1,b_1]}} \frac{\int Pq_{n}\, d\mu _{1}}
  {(Pq_{n}) (x_{j,2n})} \\
  &\leq &\inf_{\substack{ \deg(P) \leq n-2, \\ P\geq 0\text{ in } [a_1,b_1]}} \frac{\int PR_{n}^*q_{n}\,d\mu _{1}}{(PR_{n}^*q_{n}) (x_{j,2n})} \\
  &\leq &\inf_{\substack{ \deg(P) \leq n-2, \\ P\geq 0\text{ in } [a_1,b_1]}} \frac{\bigl( 1+o(1) \bigr) \int P\, d\mu_{1}}{P(x_{j,2n})} \\
  &\leq &\bigl( 1+o(1) \bigr) \lambda _{\left[ \frac{n}{2}\right]}( \mu _{1},x_{j,2n}) .
\end{eqnarray*}

(b) This follows from standard upper bounds for Christoffel functions \cite[Lemma III.3.2, p.~103]{Freud}. 
\end{proof}

We can now  deduce some results for the spacing of zeros:

\begin{proposition}   \label{prop33}
\begin{enumerate}
\item[(a)] Assume the hypotheses of Proposition \ref{prop23}(b). Then for $x_{j+1,2n},x_{j-1,2n}\in J_{1}$,
\begin{equation*}
x_{j+1,2n}-x_{j-1,2n}\geq \frac{C}{n}.
\end{equation*}
\item[(b)] Assume the hypotheses of Proposition \ref{prop32}(b). Then for $x_{j+1,2n},x_{j,2n}\in J_{1}$, 
\begin{equation*}
x_{j+1,2n}-x_{j,2n}\leq \frac{C}{n}.
\end{equation*}
\end{enumerate}
\end{proposition}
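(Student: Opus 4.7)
The plan is to combine the Poss\'{e}--Chebyshev--Markov--Stieltjes inequalities of Corollary \ref{cor22} with the one-sided estimates for the positive weights $\lambda_{j,2n}^{(1)}$ established in Propositions \ref{prop23} and \ref{prop32}. Absolute continuity of $\mu_{1}$ near $J_{1}$ allows one to compare $\int d\mu_{1}$ over a short interval linearly with the length of that interval: on a slightly enlarged open neighborhood of $J_{1}$ I shall use both $d\mu_{1}(x)\leq M\,dx$ and $d\mu_{1}(x)\geq c\,dx$ for constants $c,M>0$ inherited from the regularity of $\mu_{1}$ supplied by the hypotheses of Propositions \ref{prop23}(b) and \ref{prop32}(b).

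For part (a), I would apply the upper PCMS inequality \eqref{PMS2} to obtain
\[
\lambda_{j,2n}^{(1)} \;\leq\; \int_{x_{j-1,2n}}^{x_{j+1,2n}} d\mu_{1}(x) \;\leq\; M\bigl(x_{j+1,2n}-x_{j-1,2n}\bigr).
\]
Proposition \ref{prop23}(b) supplies the lower bound $\lambda_{j,2n}^{(1)}\geq C_{1}/n$ valid whenever $x_{j,2n}\in J_{1}$ (which follows from the assumption $x_{j\pm 1,2n}\in J_{1}$ and the ordering of the nodes). Dividing one bound by the other yields $x_{j+1,2n}-x_{j-1,2n}\geq C_{1}/(Mn)$, which is the desired lower spacing estimate.

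For part (b), I would start from the complementary inequality \eqref{PMS3}:
\[
\lambda_{j,2n}^{(1)}+\lambda_{j+1,2n}^{(1)} \;\geq\; \int_{x_{j,2n}}^{x_{j+1,2n}} d\mu_{1}(x) \;\geq\; c\bigl(x_{j+1,2n}-x_{j,2n}\bigr).
\]
Proposition \ref{prop32}(b) gives $\lambda_{k,2n}^{(1)}\leq C_{2}/n$ for both $k=j$ and $k=j+1$ (both zeros are assumed to lie in $J_{1}$), so the left side is at most $2C_{2}/n$, producing $x_{j+1,2n}-x_{j,2n}\leq 2C_{2}/(cn)$.

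The only point requiring mild care is bookkeeping: one must ensure that every zero entering the PCMS integrals lies in the open neighborhood of $J_{1}$ on which the density bounds and weight estimates are valid. Since $J_{1}$ is a closed subinterval strictly inside that neighborhood and the zero spacing is $O(1/n)$ (a fact that itself follows from the argument above), one may shrink $J_{1}$ slightly to a fixed smaller closed subinterval and apply the results of Propositions \ref{prop23}(b) and \ref{prop32}(b) on the larger neighborhood; this suffices for all sufficiently large $n$. There is no genuine technical obstacle here — the proof is a direct combination of the PCMS inequalities with the weight bounds already at our disposal.
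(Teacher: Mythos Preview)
Your approach---sandwiching $\lambda_{j,2n}^{(1)}$ between the Poss\'e--Chebyshev--Markov--Stieltjes bounds of Corollary~\ref{cor22} and the pointwise weight estimates of Propositions~\ref{prop23} and~\ref{prop32}---is precisely the route the paper indicates; its proof is the single sentence ``These follow from Corollary~\ref{cor22} and Propositions~\ref{prop23} and~\ref{prop32}.'' So in spirit you have reconstructed the intended argument.

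There is, however, a mismatch between the density bounds you invoke and the hypotheses actually available in each part. Part~(a) assumes only the hypotheses of Proposition~\ref{prop23}(b), namely that $\mu_1'$ is bounded \emph{below} by a positive constant near $J_1$; yet your argument for~(a) uses
\[
\int_{x_{j-1,2n}}^{x_{j+1,2n}} d\mu_1 \;\leq\; M\bigl(x_{j+1,2n}-x_{j-1,2n}\bigr),
\]
which requires an \emph{upper} bound $\mu_1'\leq M$. Symmetrically, part~(b) assumes only the hypotheses of Proposition~\ref{prop32}(b), which give $\mu_1'$ bounded \emph{above}; yet you use $\int_{x_{j,2n}}^{x_{j+1,2n}} d\mu_1 \geq c\,(x_{j+1,2n}-x_{j,2n})$, which needs a lower bound on the density. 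The two one-sided bounds are not interchangeable, and each part of the proposition, as stated, supplies only one of them. Your sentence claiming that both $d\mu_1\leq M\,dx$ and $d\mu_1\geq c\,dx$ are ``inherited from the regularity of $\mu_1$ supplied by the hypotheses of Propositions~\ref{prop23}(b) and~\ref{prop32}(b)'' conflates two separate sets of assumptions that the proposition keeps apart.

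The paper's one-line proof does not resolve this, so it may well be an imprecision in the statement itself: if one assumes $0<c\leq \mu_1'\leq M$ on a neighborhood of $J_1$ throughout, your argument is correct verbatim. But as written, the density estimate you need in each part is exactly the one not assumed, and you should flag this rather than declare ``there is no genuine technical obstacle here.''
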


\begin{proof}
These follow from Corollary \ref{cor22} and Propositions \ref{prop23} and \ref{prop32}.
\end{proof}

\section{Convergence results}

\subsection{The positive weights}

Since the simultaneous quadrature rules \eqref{quadrature1}--\eqref{quadrature2} are correct for polynomials of degree $\leq 3n-1$, one would expect
that the quadrature rules also converge, for $n \to \infty$ for functions $f$ that can be approximated well by polynomials. However, this is not
true, and this is mainly due to the fact that not all the quadrature weights are positive. However, it is true that if you restrict the quadrature
sum to quadrature nodes on the appropriate interval, which is $[a_1,b_1]$ for the first quadrature \eqref{quadrature1}, then one has convergence 
whenever $f$ can be approximated by weighted polynomials, and the weight is in terms of the polynomial containing the zeros on the other interval,
which is $[a_2,b_2]$ for the first quadrature rule. Again, we can use Totik's theorem (Theorem \ref{thm:Totik}) on weighted
polynomial approximation \cite[Thm. 8.3]{Totik}. 
If we use the weights $w_n(x) = q_n(x)$ on $[a_1,b_1]$, then the support of the equilibrium measure $\mu_{w_n}$ is a subset $[a_1,b_n^*] \subset [a_1,b_1]$, where $b_n^* \to b^*$, as we have seen in Section \ref{sec:MRS}. 
Totik's theorem tells us that we can approximate every continuous function $f$ that vanishes outside $(a_1,b^*)$
uniformly on $[a_1,b^*]$ by weighted polynomials $q_nR_n$, i.e., there exist polynomials $R_n$ of degree $\leq n$ such that
\begin{equation}  \label{fRq}
      \lim_{n \to \infty} \|f - R_nq_n \|_{L_\infty([a_1,b^*])} = 0.   
\end{equation}
This allows to prove the following result.

\begin{theorem}   \label{thm:31}
Let $f$ be a continuous function on $[a_1,b_1]$ and $f(b^*)=0$, where $[a_1,b^*] \subset [a_1,b_1]$ is the support
of the first measure $\nu_1$ of the vector equilibrium problem for the Angelesco system. Then
\[   \lim_{n \to \infty}  \sum_{j=1}^n \lambda_{j,2n}^{(1)} f(x_{j,2n}) = \int_{a_1}^{b^*} f(x)\, d\mu_1(x).  \]
\end{theorem}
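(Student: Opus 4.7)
My plan is to combine the Gauss-type identity \eqref{quad1}, which expresses $\sum_{k=1}^{n}\lambda_{k,2n}^{(1)} P(x_{k,2n}) q_n(x_{k,2n})$ exactly as $\int_{a_1}^{b_1} Pq_n\, d\mu_1$ whenever $\deg P\leq 2n-1$, with the Totik-type weighted polynomial approximation \eqref{fRq} recalled just before the statement. The conceptual point is that although the full quadrature \eqref{quadrature1} is unstable on arbitrary polynomials, once it is tested against a weighted polynomial $R_n q_n$ it collapses to the ordinary Gaussian quadrature for the measure $q_n\,d\mu_1$ on $[a_1,b_1]$, whose weights $\lambda_{k,2n}^{(1)} q_n(x_{k,2n})$ are positive Christoffel numbers by \eqref{lambdaq}. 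Functions $f$ that can be approximated by such weighted polynomials should therefore inherit convergence.

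Given $\epsilon>0$, I would first truncate $f$ to obtain a continuous function $g$ on $[a_1,b_1]$ with $g(a_1)=g(b^*)=0$, $g\equiv 0$ on $[b^*,b_1]$, $g=f$ on $[a_1+\delta,b^*-\delta]$, and $|g|\leq\|f\|_\infty$, where $\delta>0$ is chosen so small that $|f|<\epsilon$ on $[b^*-\delta,b^*]$ (possible by the assumption $f(b^*)=0$). Then $g$ vanishes outside $(a_1,b^*)$, so \eqref{fRq} supplies polynomials $R_n$ of degree at most $n$ with $\|g-R_n q_n\|_{L_\infty([a_1,b^*])}\to 0$. The majorization principle for weighted polynomials \cite[Theorem~II.2.1]{SaffTotik} then gives a uniform bound for $R_n q_n$ on $[a_1,b_1]$ and geometric decay of $R_n q_n$ on every compact subset of $(b^*,b_1]$, using the convergence $b_n^*\to b^*$ of the MRS numbers established in Section~\ref{sec:MRS}.

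Substituting $P=R_n$ in \eqref{quad1} gives
\[
\sum_{k=1}^n \lambda_{k,2n}^{(1)} R_n(x_{k,2n}) q_n(x_{k,2n}) = \int_{a_1}^{b_1} R_n(x) q_n(x)\, d\mu_1(x),
\]
and the right-hand side tends, by bounded convergence, to $\int_{a_1}^{b^*} g\, d\mu_1$, which differs from $\int_{a_1}^{b^*} f\, d\mu_1$ by at most $\|f\|_\infty\,\mu_1([a_1,a_1+\delta]) + \epsilon\,\mu_1([b^*-\delta,b^*])$. To match the left-hand side with $\sum_{k=1}^n\lambda_{k,2n}^{(1)} f(x_{k,2n})$ I would use the positivity of the weights (Theorem~\ref{thm1}) and split the nodes into those in $[a_1+\delta,b^*-\delta]$, where $|f-R_n q_n|=o(1)$ uniformly, and the complement, where $|f-R_n q_n|\leq 2\|f\|_\infty + o(1)$ with geometric smallness on $(b^*,b_1]$ by Step 2. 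The corresponding sum of weights over the complement is controlled by the Chebyshev--Markov--Stieltjes bound \eqref{PMS2} of Corollary~\ref{cor22}, which telescopes into a bound of the form $C\,\mu_1\bigl([a_1,a_1+\delta]\cup[b^*-\delta,b_1]\bigr) + o(1)$. Letting $n\to\infty$ and then $\delta,\epsilon\to 0$ then yields the theorem.

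The main obstacle is this last estimate: one has no pointwise control on individual $\lambda_{k,2n}^{(1)}$ near the soft edge $b^*$ or on the whole interval $(b^*,b_1]$ (the region of ``ghost'' nodes whose asymptotic density under $\nu_1$ is zero), yet their sum over these bad regions must be tamed. The Chebyshev--Markov--Stieltjes inequality of Corollary~\ref{cor22} is precisely the tool that packages these unknown sums into $\mu_1$-measures of slightly enlarged intervals, which vanish with $\delta$; without it the comparison between the quadrature sum and the true integral could not be closed.
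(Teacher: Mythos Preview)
Your proposal is correct and follows essentially the same route as the paper: apply Totik's weighted approximation \eqref{fRq} to (a version of) $f$, use the exact Gaussian identity \eqref{quad1} for the resulting $R_nq_n$, and control the positive weights via the Poss\'e--Chebyshev--Markov--Stieltjes inequalities of Corollary~\ref{cor22}. Your additional truncation $g$ (forcing $g(a_1)=0$) and the local bounds \eqref{PMS2} near the endpoints are a more careful handling of the boundary regions $[a_1,a_1+\delta]$ and $[b^*-\delta,b_1]$ than the paper's terser argument, which simply invokes the global bound \eqref{PMS4} and the uniform smallness of $R_nq_n$ on $[b^*,b_1]$.
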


Observe that we restrict the quadrature rule and only the nodes on $[a_1,b_1]$ are used.

\begin{proof}
Introduce the function $f^*$ as the restriction of $f$ to $[a_1,b^*]$ and zero elsewhere, then Totik's theorem applied to $f^*$ gives a sequence of polynomials $(R_n)_n$ of degree $\leq n$, such that
\[    \lim_{n \to \infty} \| f - R_nq_n\|_{L_\infty([a_1,b^*])} \to 0.  \]
Straightforward estimations give
\begin{eqnarray*}
 \lefteqn{ \left|  \sum_{j=1}^n  \lambda_{j,2n}^{(1)} f(x_{j,2n}) - \int_{a_1}^{b^*} f(x)\, d\mu_1(x) \right| } & & \\
    & \leq &   \sum_{j=1}^n \lambda_{j,2n}^{(1)} \left| f(x_{j,2n}) - R_n(x_{j,2n})q_n(x_{j,2n}) \right| \\
       & &     +\ \left| \sum_{j=1}^n \lambda_{j,2n}^{(1)} R_n(x_{j,2n}) q_n(x_{j,2n}) - \int_{a_1}^{b_1} R_n(x)q_n(x)\, d\mu_1(x) \right| \\
       & &    +\  \int_{a_1}^{b^*} |R_n(x)q_n(x) - f(x)|\, d\mu_1(x) + \int_{b^*}^{b_1} |R_n(x)|\ |q_n(x)|\,d\mu_1(x).  
\end{eqnarray*}
Now $R_nq_n$ is a polynomial of degree $2n \leq 3n-1$ that vanishes at the zeros of $q_n$, hence the quadrature rule gives
\[   \sum_{j=1}^n \lambda_{j,2n}^{(1)} R_n(x_{j,2n}) q_n(x_{j,2n}) =    \sum_{j=1}^{2n} \lambda_{j,2n}^{(1)} R_n(x_{j,2n}) q_n(x_{j,2n})
     = \int_{a_1}^{b_1} R_n(x)q_n(x)\, d\mu_1(x).  \]
We then find
\begin{eqnarray*}
 \lefteqn{ \left|  \sum_{j=1}^n  \lambda_{j,2n}^{(1)} f(x_{j,2n}) - \int_{a_1}^{b^*} f(x)\, d\mu_1(x) \right| } & & \\
    & \leq &   \| f-R_nq_n\|_\infty \left( \sum_{j=1}^n \lambda_{j,2n}^{(1)} + \int_{a_1}^{b^*} d\mu_1(x) \right) + 
    \int_{b^*}^{b_1} |R_n(x)| \ |q_n(x)|\, d\mu_1(x). 
\end{eqnarray*}
Recall that  $\sum_{j=1}^n \lambda_{j,2n}^{(1)}$ remains bounded (see \eqref{PMS4} in Corollary \ref{cor22}), hence the result will be
proved if we can show that
$\int_{b^*}^{b_1} |R_n(x)|\ |q_n(x)|\, d\mu_1(x)$ tends to zero. But this follows because $f^*=0$ on $[b^*,b_1]$ and $R_nq_n$ converges to $f^*$
uniformly on $[a_1,b_1]$, hence $R_nq_n \to 0$ uniformly on $[b^*,b_1]$ (see the remark in \cite[p.~49]{Totik} between Theorem 8.1 and its proof).
\end{proof}

\noindent\textbf{Remark:} The restriction $f(b^*)=0$ can be removed if we assume that the measure $\mu_1$ has no mass at $b^*$.

\subsection{The alternating weights}

From \eqref{alt1} we have that
\begin{equation}    \label{lambda1p2}
    \lambda_{n+j,2n}^{(1)} p_n^2(x_{n+j,2n}) = w_j  , \qquad 1 \leq j \leq n , 
\end{equation}
where
\[    w_j = \int_{a_1}^{b_1}  \ell_{j,n}^{(2)}(x)\, d\mu(x)  \]
are the quadrature weights associated with Lagrange interpolation at the nodes $\{x_{n+j,2n}: 1 \leq j \leq n\}$, which are the zeros of $q_n$,
for integrals over $[a_2,b_2]$ with the measure $d\mu(x) = p_n^2\, d\mu_1(x)$. Observe that the interpolation nodes are on $[a_2,b_2]$ whereas the integral
is over $[a_1,b_1]$. The $\ell_{j,n}^{(2)}$ are the fundamental polynomials of Lagrange interpolation
\[   \ell_{j,n}^{(2)}(x) = \frac{q_n(x)}{(x-x_{n+j,2n}) q_n'(x_{n+j,2n})}, \]
so
\[  w_j = \frac{1}{q_n'(x_{n+j,2n})} \int_{a_1}^{b_1} \frac{q_n(x)}{x-x_{n+j,2n}}\, p_n^2(x)\, d\mu_1(x).  \]
Recall that $q_n$ is positive on $[a_1,b_1]$, hence 
\[  |w_j| = \frac{1}{|q_n'(x_{n+j,2n})|} \int_{a_1}^{b_1} \frac{q_n(x)}{|x-x_{n+j,2n}|} p_n^2(x)\, d\mu_1(x).  \]
We have the obvious estimate $|b_2-a_1| \geq |x-x_{n+j,2n}| \geq |a_2-b_1|$ when $x \in [a_1,b_1]$ and $x_{n+j,2n} \in [a_2,b_2]$, so that
\begin{multline*}
    \frac{1}{|q_n'(x_{n+j,2n})|\ |b_2-a_1|} \int_{a_1}^{b_1} q_n(x) p_n^2(x)\, d\mu_1(x) \leq |w_j| \\
  \leq \frac{1}{|q_n'(x_{n+j,2n})|\ |a_2-b_1|} \int_{a_1}^{b_1} q_n(x) p_n^2(x)\, d\mu_1(x).  
\end{multline*}
Now $p_n$ is the $n$th degree (monic) orthogonal polynomial for the measure $q_n(x)\, d\mu_1(x)$ on $[a_1,b_1]$, so
\[   \inf_{r(x) = x^n+\cdots} \int_{a_1}^{b_1} r^2(x) q_n(x)\, d\mu_1(x) = \int_{a_1}^{b_1} p_n^2(x) q_n(x)\, d\mu_1(x) = \frac{1}{\gamma_n^2(q_n\,d\mu_1)}, \]
where $\gamma_n(q_n\, d\mu_1)$ is the leading coefficient of the monic orthogonal polynomial $p_n$. Hence
\[   \frac{1}{\gamma_n^2(q_n\,d\mu_1) |q_n'(x_{n+j,2n})| \ |b_2-a_1|} \leq |w_j| \leq  \frac{1}{\gamma_n^2(q_n\,d\mu_1) |q_n'(x_{n+j,2n})| \ |a_2-b_1|}, \]
and by using \eqref{lambda1p2} this gives
\begin{multline}  \label{lambda2-ini}
    \frac{1}{p_n^2(x_{n+j,2n})\gamma_n^2(q_n\,d\mu_1)  \ |b_2-a_1|} \leq |\lambda_{n+j,2n}^{(1)}| |q_n'(x_{n+j,2n})| \\
   \leq   \frac{1}{p_n^2(x_{n+j,2n}) \gamma_n^2(q_n\,d\mu_1) \ |a_2-b_1|}. 
\end{multline}

\begin{theorem}  \label{thm:34}
Suppose that $\mu_1' >0$ on $[a_1,b_1]$ and $\mu_2' > 0$ on $[a_2,b_2]$ and that $b_1 < a_2$. Then
\begin{equation}  \label{lambda2-fin}
   \lim_{n \to \infty} |\lambda_{n+j,2n}^{(1)}|^{1/n} = \exp \Bigl( 2 U(x;\nu_1) + U(x;\nu_2) - \ell_1 \Bigr), 
\end{equation}
whenever $x_{n+j,2n} \to x \in [a^*,b_2]$.
\end{theorem}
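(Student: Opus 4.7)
The plan is to start from the two-sided bound \eqref{lambda2-ini} derived immediately before the theorem, take $n$-th roots, and identify the limit of each surviving factor. Rewriting that bound as
\[
\frac{1}{p_n(x_{n+j,2n})^2\,\gamma_n(q_n\,d\mu_1)^2\,|b_2-a_1|} \leq |\lambda^{(1)}_{n+j,2n}|\,|q_n'(x_{n+j,2n})| \leq \frac{1}{p_n(x_{n+j,2n})^2\,\gamma_n(q_n\,d\mu_1)^2\,|a_2-b_1|},
\]
and taking $n$-th roots, the fixed positive constants $|b_2-a_1|$ and $|a_2-b_1|$ (both strictly positive thanks to $b_1<a_2$) become $1$ in the limit. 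It therefore suffices to determine the $n$-th root limits of $\gamma_n(q_n\,d\mu_1)^2$, of $p_n(x_{n+j,2n})^2$, and of $|q_n'(x_{n+j,2n})|$ separately, and then to combine them.

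Two of those three limits are immediate. First, \eqref{gamma} gives $\gamma_n(q_n\,d\mu_1)^{2/n} \to e^{\ell_1}$. Second, by \eqref{zerodis1} the normalized zero-counting measure of $p_n$ converges weakly to $\nu_1$, whose support $[a_1,b^*]$ lies inside $[a_1,b_1]$; the hypothesis $b_1<a_2$ then ensures that $x \in [a^*,b_2]$ is at positive distance from $[a_1,b_1]$, so $t \mapsto \log|x-t|$ is continuous on $\supp(\nu_1)$. Weak convergence together with $x_{n+j,2n}\to x$ now yields $\tfrac{1}{n}\log|p_n(x_{n+j,2n})| \to -U(x;\nu_1)$, whence $p_n(x_{n+j,2n})^{2/n} \to \exp(-2U(x;\nu_1))$.

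The main obstacle is the remaining limit $|q_n'(x_{n+j,2n})|^{1/n} \to \exp(-U(x;\nu_2))$, because $x_{n+j,2n}$ is itself a zero of $q_n$ lying inside $\supp(\nu_2)$. Writing
\[
\tfrac{1}{n}\log|q_n'(x_{n+j,2n})| = \int \log|x_{n+j,2n}-t|\,d\nu_n^{\sharp}(t), \qquad \nu_n^{\sharp} = \tfrac{1}{n}\sum_{k\ne j}\delta_{x_{n+k,2n}},
\]
upper semicontinuity of the integrand together with the weak convergence $\nu_n^{\sharp}\to\nu_2$ delivers the bound $\limsup\le-U(x;\nu_2)$ without effort. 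The matching lower bound requires showing that, after truncating the sum to those $k$ with $|x_{n+k,2n}-x_{n+j,2n}|\ge\delta$, the omitted near-diagonal contribution is $o_\delta(1)$ uniformly in $n$. Under $\mu_2'>0$ almost everywhere on $[a_2,b_2]$, and since $p_n$ is of constant sign on $[a_2,b_2]$ with $-\tfrac{1}{n}\log p_n$ governed by an external field of the type studied in Section~\ref{sec:MRS}, the varying orthogonality measure $p_n\,d\mu_2$ falls into the Stahl-Totik class of regular varying measures; the resulting two-sided Christoffel-function estimates force the spacing $x_{n+k+1,2n}-x_{n+k,2n}=O(1/n)$ on compact subsets of the interior of $\supp(\nu_2)$, which makes the near-diagonal contribution $O(\delta\log(1/\delta))$ uniformly in $n$. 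Sending first $n\to\infty$ and then $\delta\to 0$ completes the lower bound, and combining the three limits produces
\[
|\lambda^{(1)}_{n+j,2n}|^{1/n} \to \frac{1}{\exp(-2U(x;\nu_1))\,e^{\ell_1}\,\exp(-U(x;\nu_2))} = \exp\bigl(2U(x;\nu_1)+U(x;\nu_2)-\ell_1\bigr),
\]
as required.
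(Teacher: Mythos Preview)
Your argument coincides with the paper's through the upper bound for $|q_n'(x_{n+j,2n})|^{1/n}$: you start from \eqref{lambda2-ini}, handle $\gamma_n(q_n\,d\mu_1)^{2/n}$ via \eqref{gamma}, handle $p_n(x_{n+j,2n})^{2/n}$ via the zero distribution of $p_n$, and get $\limsup|q_n'(x_{n+j,2n})|^{1/n}\le\exp(-U(x;\nu_2))$ from the principle of descent. All of that is exactly what the paper does.

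The gap is in your lower bound for $|q_n'(x_{n+j,2n})|^{1/n}$. Your control of the near-diagonal part of $\tfrac1n\sum_{k\ne j}\log|x_{n+j,2n}-x_{n+k,2n}|$ requires a \emph{lower} bound of order $1/n$ on the spacing of consecutive zeros of $q_n$ near $x$ (your ``$O(1/n)$'' is the wrong direction; an upper bound on spacing does nothing to prevent a single neighbouring zero at distance $e^{-n}$ from contributing $-1$). Such lower spacing bounds are not a consequence of Stahl--Totik regularity; in this paper they come from Christoffel-function lower bounds as in Proposition~\ref{prop23}(b) and Proposition~\ref{prop33}(a), which need $\mu'$ locally bounded below by a positive constant. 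The theorem assumes only $\mu_2'>0$ almost everywhere, and under that hypothesis alone the spacing estimate you invoke is not available.

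The paper avoids spacing entirely by exploiting the \emph{second} quadrature rule. Plugging $P(x)=p_n(x)q_n^2(x)/(x-x_{n+j,2n})^2$ into \eqref{quadrature2} gives
\[
\lambda_{n+j,2n}^{(2)}\,p_n(x_{n+j,2n})\,|q_n'(x_{n+j,2n})|^2 \ \ge\ \frac{1}{(b_2-a_2)^2\,\gamma_n^2(p_n\,d\mu_2)}.
\]
Since $\lambda_{n+j,2n}^{(2)}\le\mu_2([a_2,b_2])$ by the analogue of Corollary~\ref{cor22} on the second interval, taking $n$-th roots and using \eqref{gamma} and \eqref{pnxj} yields $\liminf|q_n'(x_{n+j,2n})|^{2/n}\ge\exp\bigl(U(x;\nu_1)-\ell_2\bigr)$. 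The variational condition \eqref{varcon2} on $[a^*,b_2]$ then turns this into $\exp(-2U(x;\nu_2))$, matching the upper bound. This route uses only $n$-th root asymptotics and the equilibrium relations already in hand, and works on all of $[a^*,b_2]$ including the endpoints, with no local regularity of $\mu_2$ beyond $\mu_2'>0$ a.e.
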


\begin{proof}
We have
\[    \lim_{n\to \infty}  |p_n(x)|^{1/n} = \exp[-U(x;\nu_1)], \qquad  x \in \mathbb{C} \setminus [a_1,b^*], \]
where $\nu_1$ is the asymptotic distribution of the zeros of $p_n$ and this convergence is uniform on compact subsets of $\mathbb{C} \setminus [a_1,b^*]$,
in particular 
\begin{equation}  \label{pnxj}
     \lim_{n \to \infty}  |p_n(x_{n+j,2n})|^{1/n} = \exp [-U(x;\nu_1)], 
\end{equation}
whenever $x_{n+j,2n} \to x \in [a_2,b_2]$.
Together with the asymptotic behavior in \eqref{gamma} this gives for  $x \in [a^*,b_2]$ 
\[    \lim_{n \to \infty}  |\gamma_n^2(q_n\,d\mu_1)p_n^2(x_{n+j,2n})|^{1/n} = \exp[-2U(x;\nu_1)+\ell_1]. \]
Hence we find from \eqref{lambda2-ini},
\begin{equation}    \label{lambda2q'}
    \lim_{n \to \infty}  \left(  |\lambda_{n+j,2n}^{(2)}| \ |q_n'(x_{n+j,2n})| \right)^{1/n} = \exp[2U(x;\nu_1)-\ell_1], 
\end{equation}
whenever $x_{n+j,2n} \to x \in [a^*,b_2]$.
Furthermore we have
\[    q_n'(x_{n+j,2n}) = \prod_{i=1, i\neq j}^n (x_{n+j,2n}-x_{n+i,2n}), \]
so that
\[   \frac{1}{n} \log |q_n'(x_{n+j,2n})| = \frac1{n} \sum_{i=1,i\neq j}^n \log |x_{n+j,2n}-x_{n+i,2n}| = -\frac{n-1}{n}U(x_{n+j,2n};\mu_n^*), \]
where $\mu_n^*$ is the zero counting measure of the zeros of $q_n$ without the zero $x_{j,2n}$. The measure $\mu_n^*$ has the same weak limit $\nu_2$
as the measure $\mu_n$. 
By the principle of descent \cite[Thm.~I.6.8 on p.~70]{SaffTotik} one has 
\[     \liminf_{n\to \infty} U(x_{n+j,2n}; \mu_n^*) \geq U(x;\nu_2), \qquad x \in (a^*,b_2], \]
whenever $x_{n+j,2n} \to x \in [a^*,b_2]$.   
We then find
\begin{equation}   \label{qn'up}
    \limsup_{n \to \infty} |q_n'(x_{n+j,2n})|^{1/n} \leq \exp[- U(x;\nu_2)], 
\end{equation}
whenever $x_{n+j,2n}\to x \in [a^*,b_2]$.

In order to get a lower bound on $|q_n'(x_{n+j,2n})|^{1/n}$ we take a look at the second quadrature rule \eqref{quadrature2}. If we
use $P(x)=p_n(x)q_n^2(x)/(x-x_{n+j,2n})^2 \in \mathbb{P}_{3n-2}$ in \eqref{quadrature2}, then
\[     \lambda_{n+j,2n}^{(2)} p_n(x_{n+j,2n}) |q_n'(x_{n+j,2n})|^2 = \int_{a_2}^{b_2} \frac{q_n^2(x)}{(x-x_{n+j,2n})^2}\ p_n(x)\, d\mu_2(x). \]
In the integral on the right one has $|x-x_{n+j,2n}| \leq b_2-a_2$, so that
\[      \lambda_{n+j,2n}^{(2)} p_n(x_{n+j,2n}) |q_n'(x_{n+j,2n})|^2 \geq \frac{1}{(b_2-a_2)^2} \int_{a_2}^{b_2} q_n^2(x)p_n(x)\, d\mu_2(x). \]
Recall that $(-1)^n q_n$ is the monic orthogonal polynomial of degree $n$ for the measure $p_n\, d\mu_2$ on $[a_2,b_2]$, hence
the integral on the right is $1/\gamma_n^2(p_n\,d\mu_2)$, where $\gamma_n(p_n\, d\mu_2)$ is the leading coefficient of the orthonormal
polynomial of degree $n$ for the measure $p_n\, d\mu_2$. This gives
\[    \lambda_{n+j,2n}^{(2)} p_n(x_{n+j,2n}) |q_n'(x_{n+j,2n})|^2 \geq  \frac{1}{(b_2-a_2)^2} \frac{1}{\gamma_n^2(p_n\,d\mu_2)}.  \]
Now from Corollary \ref{cor22} (for the second quadrature) we have for $x_{n+j,2n} \in [a_2,b_2]$
\[     \lambda_{n+j,2n}^{(2)} \leq \mu_2([a_2,b_2]) \]
so that $\limsup_{n\to \infty} (\lambda_{n+j,2n}^{(2)})^{1/n} \leq 1$. Using \eqref{pnxj} and the $n$th root behavior \eqref{gamma}
then gives
\begin{equation}   \label{qn'low}
    \liminf_{n \to \infty}  |q_n'(x_{n+j,2n})|^{2/n} \geq \exp[U(x;\nu_1)-\ell_2], 
\end{equation}
whenever $x_{n+j,2n} \to x \in [a^*,b_2]$. But on the interval $[a_2,b_2]$ the variational condition \eqref{varcon2} gives
$U(x;\nu_1)-\ell_2 \geq -2 U(x;\nu_2)$, hence combining \eqref{qn'up} and \eqref{qn'low} gives
\[   \lim_{n \to \infty} |q_n'(x_{n+j,2n})|^{1/n} = \exp[-U(x;\nu_2)], \qquad x_{n+j,2n} \to x \in [a^*,b_2] . \]
Combining this with \eqref{lambda2q'} gives the required result. Note that $x_{n+j,2n}$ can only converge to points in $[a^*,b_2]$,
which is why we chose to take $x \in [a^*,b_2]$. 
\end{proof}

This theorem implies that the size of the absolute value of the coefficients $\lambda_{n+j,2n}^{(1)}$ is determined by the size
of $2U(x;\nu_1)+U(x;\nu_2)-\ell_1$ on $[a^*,b_2]$, which is the interval where the zeros $x_{n+j,2n}$ accumulate. Note that
the variational condition \eqref{varcon1} shows that  $2U(x;\nu_1)+U(x;\nu_2)-\ell_1 = 0$ on $[a_1,b^*]$, but we need to know
the size of this quantity on the other interval $[a^*,b_2]$.

The function $2U(x;\nu_1)+U(x;\nu_2)-\ell_1$ is a continuous function on $(-\infty,a_1)$ and on $(b_2,+\infty)$, which is increasing on $(-\infty,a_1)$ and decreasing on $(b_2,+\infty)$. On $[a_1,b^*]$ we know that it is $0$ (hence constant) and on
$[a^*,b_2]$ the variational condition \eqref{varcon2} gives $2U(x;\nu_2) = \ell_2-U(x;\nu_1)$ so that
\[   2U(x;\nu_1)+U(x;\nu_2)-\ell_1 = \frac32 U(x;\nu_1) - \ell_1 + \frac{\ell_2}2, \qquad x \in [a^*,b_2], \]
and $U(x;\nu_1)$ is decreasing on $(b_1,+\infty)$, which implies that $2U(x;\nu_1)+U(x;\nu_2)-\ell_1$ is decreasing on
$[a^*,b_2]$. This means that $2U(x;\nu_1)+U(x;\nu_2)-\ell_1$ is maximal on $[a^*,b_2]$ at the initial point $a^*$,
meaning that $|\lambda_{n+j,2n}^{(1)}|$ will be maximal for small $j$ and it will grow exponentially when
$2U(x;\nu_1)+U(x;\nu_2)-\ell_1 >0$, or decrease exponentially when $2U(x;\nu_1)+U(x;\nu_2)-\ell_1<0$ there. 

In the gap $(b^*,a^*)$ we have that $2U(x;\nu_1)$ is decreasing and $U(x;\nu_2)$ is increasing, so the behavior of
$2U(x;\nu_1)+U(x;\nu_2)-\ell_1$ is not immediately clear. However, if $b^* < b_1$ then $2U(x;\nu_1)+U(x;\nu_2)-\ell_1 > 0$
on $(b^*,b_1]$ so that $2U(x;\nu_1)+U(x;\nu_2)-\ell_1$ is increasing near $b^*$. In that case $a^*=a_2$ and
we know already that $2U(x;\nu_1)+U(x;\nu_2)-\ell_1$ is decreasing on $[a_2,b_2]$.
Whether or not the initial $|\lambda_{n+j,2n}^{(1)}|$ are exponentially increasing or decreasing thus follows by a careful investigation
of the function $2U(x;\nu_1)+U(x;\nu_2)-\ell_1$. We will give a few examples of what happens in actual cases.

\begin{example}
Two disjoint intervals $[a_1,b_1]$ and $[a_2,b_2]$ of equal size. In this case the measure $\nu_1$ is supported on $[a_1,b_1]$
and $\nu_2$ is supported on $[a_2,b_2]$. This corresponds to case I in \cite{AptKuijlVA}. In Figure \ref{fig:example1} we have plotted
$2U(x;\nu_1)+U(x;\nu_2)$ where we have taken
$[a_1,b_1]=[-\sqrt{5+4\sqrt{2}},-1]$ and $[a_2,b_2] = [1,\sqrt{5+4\sqrt{2}}]$. The function $\Phi(x) = \exp -U(x)$, where $U$ is the 
logarithmic potential of $\nu_1$ or $\nu_2$, satisfies the third order algebraic equation
\begin{equation}   \label{Phi}
  \Phi^3 + q_1(x)\Phi^2 + q_2(x) \Phi + q_0 = 0, 
\end{equation}
with
\begin{eqnarray*}
   q_0 &=&  \frac{32\sqrt{3}}{27} \left( 1 - \frac49 \sqrt{2} \right), \\
   q_1 &=& \frac{4\sqrt{3}}{3} (3-\sqrt{2}), \\
   q_2(x) &=& \frac49 (3-2\sqrt{2})(27+16\sqrt{2}-9x^2),
\end{eqnarray*}
where one needs to choose the correct branch for $\nu_1$ or $\nu_2$ (see \cite[Thm. 2.10]{AptKuijlVA} for more details). Observe that $2U(x;\nu_1)+U(x;\nu_2)$ is constant on the left interval
and strictly less than that constant on the right interval. This means that the quadrature weights $\lambda_{n+j,2n}^{(1)}$ for $1 \leq j \leq n$
are exponentially small.

\begin{figure}[ht]
\centering
\includegraphics[width=2.5in]{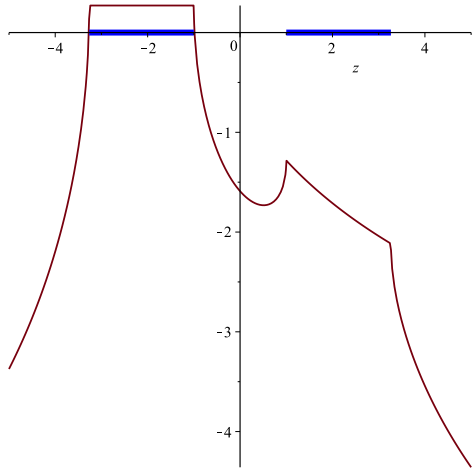}
\caption{The function $2U(z;\nu_1)+U(z;\nu_2)$ for two equal length intervals. The intervals are indicated in blue (thick).}
\label{fig:example1}
\end{figure}
\newpage
 
\end{example}   

\begin{example}
Two disjoint intervals $[a_1,b_1]$ and $[a_2,b_2]$ but $|b_1-a_1| > |b_2-a_2|$. In this case the zeros on the smaller interval
$[a_2,b_2]$ push the zeros on the larger interval $[a_1,b_2]$ to the left, and $b^* < b_1$. This corresponds to case III in \cite{AptKuijlVA}.
In Figure \ref{fig:example2} we have plotted
$2U(x;\nu_1)+U(x;\nu_2)$ for the case $[a_1,b_1] = [-1,0]$ and $[a_2,b_2] = [0,1/4]$, in which case $b^* = -1/28$ (see \cite[\S 6]{KalyaRon} where $b^*=-z_a$ with $a=-1/4$, or \cite[Eq. (1.25)]{AptKuijlVA}). 
The function $\Phi(x) = \exp [-U(x)]$ now satisfies the algebraic equation \eqref{Phi}
with
\begin{eqnarray*}
  q_0 &=& \frac{625}{1048576}, \\
  q_1(x) &=& -\frac{81}{64}x + \frac{33}{128}, \\
  q_2(x) &=& -\frac{675}{4096} x^2 - \frac{675}{8192} x + \frac{1425}{65536},
\end{eqnarray*}
where again one needs to choose the correct branch for $\nu_1$ or $\nu_2$ (see \cite[Thm. 2.18]{AptKuijlVA} for more details). Observe that
$2U(x;\nu_1)+U(x;\nu_2)$ is a constant $\ell_1$  on $[-1,-1/28]$ but bigger than that constant on $]-1/28,0]$ and then decreases, so that at the beginning
of $[0,1/4]$ the value is greater than $\ell_1$ and in the second part of the interval it is less than $\ell_1$. This means that the first
quadrature weights $(-1)^{j+1} \lambda_{n+j,2n}^{(1)}$ are exponentially large, but later on they become exponentially small.

\begin{figure}[ht]
\centering
\includegraphics[width=2.5in]{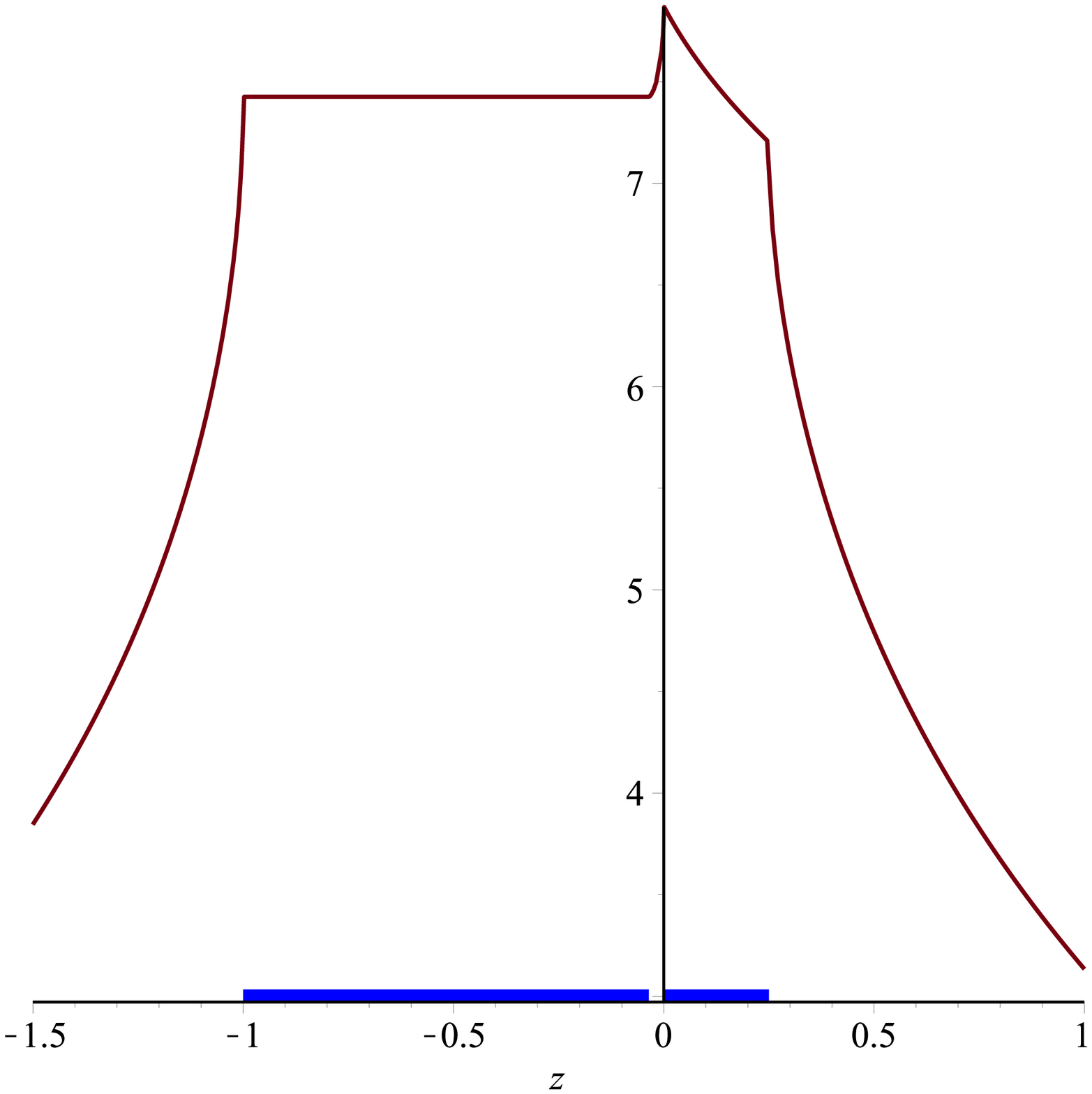}
\includegraphics[width=2.5in]{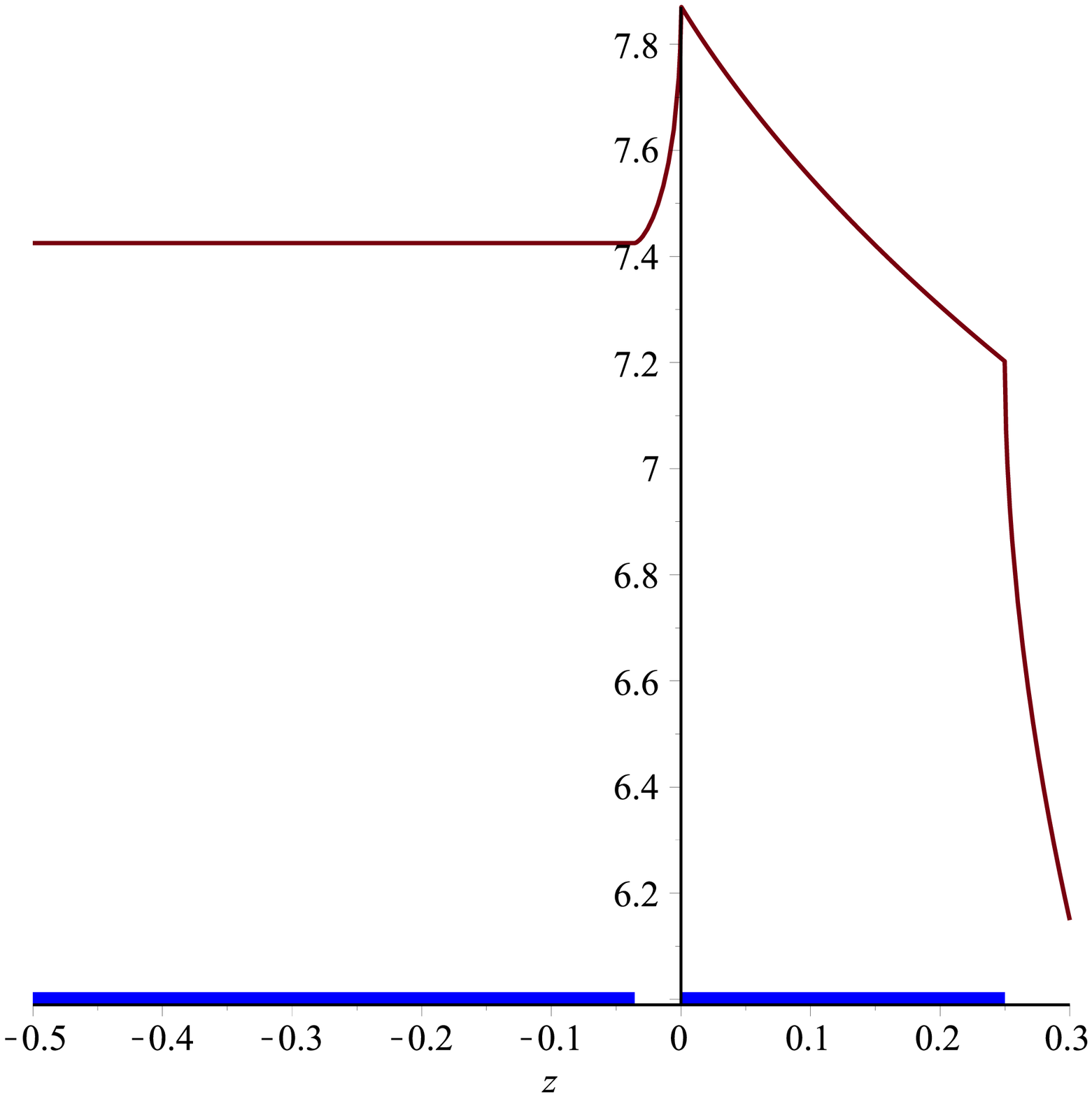}
\caption{The function $2U(z;\nu_1)+U(z;\nu_2)$ for intervals of different length. The figure on the right is a detail of the figure on the left.
The intervals $[a_1,b^*]$ and $[a_2,b_2]$ are in blue (thick).}
\label{fig:example2}
\end{figure}

\end{example}

\subsection{Convergence for analytic functions}

What kind of conditions on $f$ does one need in order that both quadrature rules converge?
We need to distinguish three cases (see Section \ref{sec:MRS}): 
\begin{description}
  \item[case I:] the supports of the equilibrium measures $\nu_1,\nu_2$ are the full intervals: $\supp(\nu_1)=[a_1,b_1]$
  and $\supp(\nu_2) = [a_2,b_2]$.
   \item[case II:] The support of $\nu_1$ is a subinterval: $\supp(\nu_1)=[a_1,b^*]$, with $b^*<b_1$. For $\nu_2$ one then has
   $\supp(\nu_2) = [a_2,b_2]$.
   \item[case III:] The support of $\nu_2$ is a subinterval: $\supp(\nu_2) = [a^*,b_2]$, with $a_2 < a^*$, and in that case
   $\supp(\nu_1)=[a_1,b_1]$.
\end{description}    

For case I it is sufficient that $f$ is a continuous function on both intervals whenever the intervals are not touching.

\begin{theorem}   \label{thm:C1}
Suppose both interval $[a_1,b_1]$ and $[a_2,b_2]$ are of the same size so that $\supp(\nu_1)=[a_1,b_1]$ and $\supp(\nu_2)=[a_2,b_2]$ and let $a_2-b_1 > 0$.
If $f$ is continuous on $[a_1,b_1]$ and $[a_2,b_2]$, then both quadrature rules converge.
\end{theorem}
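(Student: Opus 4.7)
My plan is to prove each quadrature rule separately; by symmetry it suffices to handle \eqref{quadrature1}. I would decompose
\[
\sum_{j=1}^{2n} \lambda_{j,2n}^{(1)} f(x_{j,2n}) = \underbrace{\sum_{j=1}^{n} \lambda_{j,2n}^{(1)} f(x_{j,2n})}_{S_n^+} + \underbrace{\sum_{j=n+1}^{2n} \lambda_{j,2n}^{(1)} f(x_{j,2n})}_{S_n^{\pm}}
\]
into the positive contribution from nodes in $[a_1,b_1]$ and the alternating contribution from nodes in $[a_2,b_2]$, and treat each separately.

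For $S_n^+$ I would invoke Theorem \ref{thm:31}. In case I we have $b^*=b_1$, and the standing assumption $\mu_1'>0$ implies that $\mu_1$ has no mass at $b_1$, so the remark following Theorem \ref{thm:31} removes the condition $f(b^*)=0$ and yields $S_n^+ \to \int_{a_1}^{b_1} f\,d\mu_1$ for every $f \in C[a_1,b_1]$.

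For $S_n^{\pm}$ I would prove the stronger statement $\sum_{j=n+1}^{2n}|\lambda_{j,2n}^{(1)}| \to 0$, which dominates $|S_n^{\pm}|$ through $\|f\|_{L^\infty[a_2,b_2]}$. Theorem \ref{thm:34} provides the pointwise asymptotics $|\lambda_{n+k,2n}^{(1)}|^{1/n} \to \exp F(x)$ with $F(x)=2U(x;\nu_1)+U(x;\nu_2)-\ell_1$, whenever $x_{n+k,2n} \to x \in [a^*,b_2]=[a_2,b_2]$. If one can establish the uniform strict negativity $\sup_{x \in [a_2,b_2]} F(x) \leq -c < 0$, then a routine subsequence/compactness argument combined with \eqref{zerodis2} upgrades this into the uniform bound $|\lambda_{n+k,2n}^{(1)}| \leq e^{-cn/2}$ valid for all $k$ and all large $n$, so that $\sum_{j=n+1}^{2n}|\lambda_{j,2n}^{(1)}| \leq n\,e^{-cn/2} \to 0$.

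The main obstacle is therefore establishing the strict negativity of $F$ on $[a_2,b_2]$. The paper's discussion following Theorem \ref{thm:34} already reduces this to the single point $F(a_2) < 0$, since $F$ is strictly decreasing on $[a_2,b_2]$. In the gap $(b_1,a_2)$, $F$ is real-analytic with $F''(x)=2U''(x;\nu_1)+U''(x;\nu_2)>0$, so it is convex, and $F(b_1)=0$ by \eqref{varcon1}. The equal-size hypothesis provides the reflection symmetry $x \mapsto 2m-x$ about the midpoint $m$ of the gap, which swaps the two intervals, interchanges $\nu_1 \leftrightarrow \nu_2$, forces $\ell_1=\ell_2$, and converts $F$ into the dual function $G(x) := U(x;\nu_1) + 2U(x;\nu_2) - \ell_2$ via $F(x) = G(2m-x)$; hence $F(a_2) = G(b_1)$. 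Using $F(b_1) = 0$ to eliminate $U(b_1;\nu_1)$ from the definition of $G(b_1)$ gives the explicit formula $F(a_2) = \tfrac{3}{2}\,U(b_1;\nu_2) - \tfrac{\ell_2}{2}$, and the strict separation $a_2 > b_1$ together with the minimal-energy characterization of $\nu_2$ makes this quantity strictly negative --- precisely the behaviour exhibited in Figure \ref{fig:example1}. Once this negativity is in hand, the two pieces combine to give the first quadrature rule, and the symmetric argument for \eqref{quadrature2} completes the proof.
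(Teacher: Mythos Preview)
Your overall strategy matches the paper's exactly: split the sum into the positive part (handled by Theorem~\ref{thm:31}) and the alternating part (handled by Theorem~\ref{thm:34}), and then show that $F(x)=2U(x;\nu_1)+U(x;\nu_2)-\ell_1$ is bounded above by a strictly negative constant on $[a_2,b_2]$. The reduction to $F(a_2)<0$ via the monotonicity discussed after Theorem~\ref{thm:34} is also the paper's route.

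The gap is in your last step. Your formula $F(a_2)=\tfrac32 U(b_1;\nu_2)-\tfrac{\ell_2}{2}$ is correct, but the sentence ``the strict separation $a_2>b_1$ together with the minimal-energy characterization of $\nu_2$ makes this quantity strictly negative'' is not a proof: the variational inequality for $\nu_2$ says nothing at the point $b_1$, which lies outside the constraint set $[a_2,b_2]$. What is actually needed is the elementary observation that $U(\cdot;\nu_2)$ is strictly increasing on $(-\infty,a_2]$ (equivalently, $U(\cdot;\nu_1)$ is strictly decreasing on $[b_1,\infty)$), so that $U(b_1;\nu_2)<U(a_2;\nu_2)$; combining this with your reflection symmetry $U(a_2;\nu_1)=U(b_1;\nu_2)$ and the variational equality \eqref{varcon2} at $a_2$ gives $3U(b_1;\nu_2)<\ell$, which is exactly the desired inequality. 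The paper carries out the mirror image of this computation: it writes $F(x)=U(x;\nu_1)-U(x;\nu_2)$ on $[a_2,b_2]$ (via \eqref{varcon2} and $\ell_1=\ell_2$), notes that the maximum is at $a_2$, and then observes $U(a_2;\nu_1)<U(b_1;\nu_1)=U(a_2;\nu_2)$ by monotonicity and symmetry. Your convexity remark about $F$ on the gap, while correct, is not used and can be dropped.

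A minor correction: the implication ``$\mu_1'>0$ a.e.\ $\Rightarrow$ $\mu_1$ has no atom at $b_1$'' is false in general; a positive a.e.\ density does not preclude point masses. You need to assume the absence of an atom separately (or, as the paper tacitly does, simply invoke the remark after Theorem~\ref{thm:31}).
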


\begin{proof}
From Theorem 2.3 we already have (note that $b^*=b_1$ and $a^*=a_2$)
\[   \lim_{n \to \infty} \sum_{j=1}^n \lambda_{j,2n}^{(1)} f(x_{j,2n}) = \int_{a_1}^{b_1} f(x)\, d\mu_1(x), \]
and
\[   \lim_{n \to \infty} \sum_{j=n+1}^{2n} \lambda_{j,2n}^{(2)} f(x_{j,2n}) = \int_{a_2}^{b_2} f(x)\, d\mu_2(x), \]
hence we only need to prove that
\[   \lim_{n \to \infty} \sum_{j=n+1}^{2n} \lambda_{j,2n}^{(1)} f(x_{j,2n}) = 0, \quad
     \lim_{n \to \infty} \sum_{j=1}^{n} \lambda_{j,2n}^{(2)} f(x_{j,2n}) = 0.  \]
Note that $f$ is bounded on $[a_1,b_1]$ and on $[a_2,b_2]$. The result then follows because Theorem \ref{thm:34} implies that these quadrature weights are exponentially decreasing to $0$. We will show this for the weights $\lambda_{n+j,2n}^{(1)}$ and the reasoning is similar for the other weights. The symmetry implies that $\ell_1=\ell_2=\ell$, and if we assume (without loss of generality) that $[a_1,b_1] = [-b_2,-a_2]$, then $U(x,\nu_1)=U(-x;\nu_2)$ for $x \in \mathbb{R}$.
On $[a_2,b_2]$ we have
\[   2U(x;\nu_1) + U(x;\nu_2) = [U(x;\nu_1)+U(x;\nu_2)] + U(x;\nu_1) = \ell - U(x;\nu_2)+U(x;\nu_1), \]
where we used the variational condition \eqref{varcon2} to get $U(x;\nu_1)+U(x;\nu_2) = \ell - U(x;\nu_2)$ on $[a_2,b_2]$.
We claim that 
\begin{equation}  \label{UUbound}
   U(x;\nu_1)-U(x;\nu_2) \leq U(a_2;\nu_1)-U(a_2;\nu_2)=: c <0, 
\end{equation}
which gives
$2U(x;\nu_1)+U(x;\nu_2)-\ell \leq c < 0$ on $[a_2,b_2]$, from which the exponential decrease follows. 
To show \eqref{UUbound} we observe that $U(x;\nu_1)$ is a strictly decreasing function on $[b_1,\infty)$ so that
$U(a_2;\nu_1) < U(b_1;\nu_1) = U(a_2;\nu_2)$, where we used the symmetry, hence $c < 0$. On $[a_2,b_2]$ we have that $U(x;\nu_2)=[\ell-U(x;\nu_1)]/2$
so that $U(x;\nu_2)$ is an increasing function on $[a_2,b_2]$, and hence $U(x;\nu_1)-U(x;\nu_2)$ attains its maximum at the initial point $a_2$, giving
\eqref{UUbound}.
\end{proof}

\noindent\textbf{Remark:} When the integrals are touching ($b_1=a_2$) one still has $2U(x;\nu_1)+U(x;\nu_2)-\ell \leq 0$ on $[a_2,b_2]$, hence the quadrature weights $\lambda_{n+j,2n}^{(1)}$ for the nodes in $[a_2+\epsilon,b_2]$ will be exponentially descreasing for every $\epsilon >0$, 
but we cannot control the quadrature weights near $a_2$.
\medskip

For cases II--III a much stronger condition on $f$ is required. 
The correct region of analyticity for cases II and III is in terms of the convergence region $\Omega$ for Hermite-Pad\'e approximation to the functions
\[   g_1(z) = \int_{a_1}^{b_1} \frac{d\mu_1(x)}{z-x}, \qquad g_2(z)=\int_{a_2}^{b_2} \frac{d\mu_2(x)}{z-x}.   \]
The Hermite-Pad\'e approximants are given by respectively
\begin{equation}   \label{HPfrac}
    \frac{Q_{2n-1}(z)}{P_{n,n}(z)} = \sum_{j=1}^{2n} \frac{\lambda_{j,2n}^{(1)}}{z-x_{j,2n}}, \quad
      \frac{R_{2n-1}(z)}{P_{n,n}(z)} = \sum_{j=1}^{2n} \frac{\lambda_{j,2n}^{(2)}}{z-x_{j,2n}}, 
\end{equation}
hence they have the common denominator $P_{n,n}(z)=(-1)^n p_n(z)q_n(z)$ and the residues are the quadrature weights of the simultaneous
quadrature rules. One has
\begin{eqnarray}
     g_1(z) P_{n,n}(z) - Q_{2n-1}(z) &=& \int_{a_1}^{b_1} \frac{P_{n,n}(x)}{z-x}\, d\mu_1(x) = \mathcal{O}\left( \frac{1}{z^{n+1}} \right), \label{HP1} \\
     g_2(z) P_{n,n}(z) - R_{2n-1}(z) &=& \int_{a_2}^{b_2} \frac{P_{n,n}(x)}{z-x}\, d\mu_2(x) = \mathcal{O}\left( \frac{1}{z^{n+1}} \right), \label{HP2}
\end{eqnarray}
where
\begin{eqnarray}
    Q_{2n-1}(z) &=& \int_{a_1}^{b_1} \frac{P_{n,n}(z)-P_{n,n}(x)}{z-x} \, d\mu_1(x),  \label{Qint} \\
    R_{2n-1}(z) &=& \int_{a_2}^{b_2} \frac{P_{n,n}(z)-P_{n,n}(x)}{z-x} \, d\mu_2(x).  \label{Rint}
\end{eqnarray}
(see, e.g., \cite{WalterSAT}). 
Since the residues of the Hermite-Pad\'e approximants are the quadrature weights of the simultaneous Gaussian quadrature rules, one has
\begin{eqnarray*}
 \frac{Q_{2n-1}(x_{j,2n})}{P_{n,n}'(x_{j,2n})} &=& \int_{a_1}^{b_1} \frac{P_{n,n}(x)}{(x-x_{j,2n})P_{n,n}'(x_{j,2n})}\,d\mu_1(x) 
   = \lambda_{j,2n}^{(1)},  \\
   \frac{R_{2n-1}(x_{j,2n})}{P_{n,n}'(x_{j,2n})} &=& \int_{a_2}^{b_2} \frac{P_{n,n}(x)}{(x-x_{j,2n})P_{n,n}'(x_{j,2n})}\,d\mu_2(x) 
   = \lambda_{j,2n}^{(2)}, 
\end{eqnarray*}
which follows from \eqref{Qint}--\eqref{Rint} and \eqref{lambda-ell}.

\begin{theorem}  \label{thm:C4}
Suppose that the Hermite-Pad\'e approximants converge uniformly on compact subsets of $\Omega = \mathbb{C} \setminus E^*$, where
$E^*$ is a closed set containing $[a_1,b_1] \cup [a_2,b_2]$. If $f$ is analytic in a domain that contains $E^*$, then
\[   \lim_{n \to \infty} \sum_{j=1}^{2n} \lambda_{j,2n}^{(1)} f(x_{j,2n}) = \int_{a_1}^{b_1} f(x)\, d\mu_1(x), \]
and
\[   \lim_{n \to \infty} \sum_{j=1}^{2n} \lambda_{j,2n}^{(2)} f(x_{j,2n}) = \int_{a_2}^{b_2} f(x)\, d\mu_2(x). \]
\end{theorem}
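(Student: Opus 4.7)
The plan is a standard Cauchy-contour argument built on the hypothesis that the two Hermite-Padé fractions $Q_{2n-1}/P_{n,n}$ and $R_{2n-1}/P_{n,n}$ converge to $g_1$ and $g_2$ uniformly on compacta of $\Omega = \mathbb{C}\setminus E^*$. Since $f$ is analytic in an open domain $D \supset E^*$, and $E^*$ is closed with $[a_1,b_1]\cup[a_2,b_2]\subset E^*$, I can choose a finite union of rectifiable Jordan curves $\Gamma \subset D \cap \Omega$ that encloses $E^*$ (for instance, a thin ``tube'' around $E^*$ lying inside $D$). This $\Gamma$ is a compact subset of $\Omega$, so the assumed uniform convergence applies on it.

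Next I use Cauchy's integral formula. For every $x \in E^*$, and in particular for every quadrature node $x_{j,2n}$ and every $x\in[a_1,b_1]$,
\[
   f(x) = \frac{1}{2\pi i} \oint_\Gamma \frac{f(z)}{z-x}\, dz.
\]
Substituting this into the quadrature sum and swapping the finite sum with the integral gives
\[
   \sum_{j=1}^{2n} \lambda_{j,2n}^{(1)} f(x_{j,2n})
   = \frac{1}{2\pi i} \oint_\Gamma f(z) \sum_{j=1}^{2n} \frac{\lambda_{j,2n}^{(1)}}{z-x_{j,2n}}\, dz
   = \frac{1}{2\pi i} \oint_\Gamma f(z)\, \frac{Q_{2n-1}(z)}{P_{n,n}(z)}\, dz,
\]
by \eqref{HPfrac}. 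Similarly, substituting Cauchy's formula into $\int_{a_1}^{b_1} f(x)\,d\mu_1(x)$ and interchanging integrations (justified since $\Gamma$ is at positive distance from $[a_1,b_1]$) yields
\[
   \int_{a_1}^{b_1} f(x)\, d\mu_1(x) = \frac{1}{2\pi i} \oint_\Gamma f(z)\, g_1(z)\, dz.
\]
Subtracting,
\[
   \int_{a_1}^{b_1} f(x)\, d\mu_1(x) - \sum_{j=1}^{2n} \lambda_{j,2n}^{(1)} f(x_{j,2n})
   = \frac{1}{2\pi i} \oint_\Gamma f(z) \left[ g_1(z) - \frac{Q_{2n-1}(z)}{P_{n,n}(z)} \right] dz.
\]

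Now the hypothesis gives $Q_{2n-1}/P_{n,n} \to g_1$ uniformly on $\Gamma$, so the bracketed difference tends to $0$ uniformly on $\Gamma$. Since $f$ is continuous on $\Gamma$ and $\Gamma$ has finite length, the integral tends to $0$, which establishes the first convergence statement. The identical argument with $R_{2n-1}/P_{n,n}$ in place of $Q_{2n-1}/P_{n,n}$ and $\mu_2$ in place of $\mu_1$ gives the second statement.

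The only real obstacle is the topological step of selecting $\Gamma$: one must check that a contour inside $D\cap\Omega$ can be chosen to wind once around $E^*$. Because $E^*$ is closed with $E^* \subset D$ (an open set), a small open neighborhood $V$ of $E^*$ satisfies $\overline{V}\subset D$, and then $\Gamma = \partial V$ (after slight smoothing) has all the required properties. Everything else is routine, since the pointwise-in-$z$ convergence of the Hermite-Padé approximants on $\Gamma$, together with the uniformity on this compact set, is exactly what is hypothesized.
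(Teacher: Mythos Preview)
Your proof is correct and follows essentially the same Cauchy-contour argument as the paper: both express the quadrature sum as $\frac{1}{2\pi i}\oint_\Gamma f(z)\,Q_{2n-1}(z)/P_{n,n}(z)\,dz$ (you via Cauchy's formula for $f$ and the partial-fraction identity \eqref{HPfrac}, the paper via the residue theorem directly), then pass to the limit using the assumed uniform convergence on $\Gamma$ and identify the result with $\int f\,d\mu_1$ by Fubini and Cauchy's formula. Your explicit discussion of choosing $\Gamma\subset D\cap\Omega$ is a welcome elaboration of a point the paper leaves implicit.
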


\begin{proof}
Let $\Gamma$ be a closed contour in $\Omega$ encircling the intervals $[a_1,b_1] \cup [a_2,b_2]$.
By using Cauchy's theorem, we have 
\[    \frac{1}{2\pi i} \int_{\Gamma} f(z) \frac{Q_{2n-1}(z)}{P_{n,n}(z)}\, dz = \sum_{j=1}^{2n} \lambda_{j,2n}^{(1)} f(x_{j,2n}) , \quad
      \frac{1}{2\pi i} \int_{\Gamma} f(z) \frac{R_{2n-1}(z)}{P_{n,n}(z)}\, dz = \sum_{j=1}^{2n} \lambda_{j,2n}^{(2)} f(x_{j,2n}) . \]
We will only deal with the first quadrature sum, since the second quadrature is similar.
The contour $\Gamma$ is a compact set in $\Omega$, hence the uniform convergence of the Hermite-Pad\'e approximants gives
\[   \lim_{n \to \infty} \frac{1}{2\pi i} \int_{\Gamma} f(z) \frac{Q_{2n-1}(z)}{P_{n,n}(z)}\, dz =
     \frac{1}{2\pi i} \int_{\Gamma} f(z) \int_{a_1}^{b_1} \frac{d\mu_1(x)}{z-x}\, dz . \]
If we use Fubini's theorem to change the order of integration, and Cauchy's theorem for the function $f$, then the convergence
of the first quadrature follows.
\end{proof}

This convergence region has been investigated in detail in \cite{Kalyagin} and \cite{Aptekarev}, and depends on some geometric analysis
on a Riemann surface of genus 0 for a cubic algebraic function. We will explain the standard arguments to arrive at a description of this
convergence region. See \cite{GonRakh} and \cite[Chapter 5, \S 6.4]{NikiSor} for more details. From \eqref{HP1} we find that
\[   \int_{a_1}^{b_1} \frac{d\mu_1(x)}{z-x} - \frac{Q_{2n-1}(z)}{P_{n,n}(z)} = \frac{1}{P_{n,n}(z)} \int_{a_1}^{b_1} \frac{P_{n,n}(x)}{z-x}\, d\mu_1(x).
\]
Now we have
\[   \int_{a_1}^{b_1} \frac{P_{n,n}(x)[p_n(z)-p_n(x)]}{z-x}\, d\mu_1(x) = 0 \]
since $[p_n(z)-p_n(x)]/(z-x)$ is a polynomial in $x$ of degree $n-1$ and hence orthogonal to $P_{n,n}(x)$ on $[a_1,b_1]$ for the measure $\mu_1$.
This means that
\[   p_n(z) \int_{a_1}^{b_1} \frac{P_{n,n}(x)}{z-x}\, d\mu_1(x) = \int_{a_1}^{b_1} \frac{p_n^2(x)}{z-x} q_n(x)\, d\mu_1(x), \]
so that
\[    \int_{a_1}^{b_1} \frac{d\mu_1(x)}{z-x} - \frac{Q_{2n-1}(z)}{P_{n,n}(z)} 
   = \frac{1}{p_n^2(z) q_n(z)} \int_{a_1}^{b_1} \frac{p_n^2(x)}{z-x} q_n(x) \, d\mu_1(x).  \]
Let $z \in K$, where $K$ is a compact set in $\mathbb{C} \setminus ([a_1,b_1] \cup [a_2,b_2])$ and denote by $\delta_K$ the shortest distance between
between $K$ and $[a_1,b_1] \cup [a_2,b_2]$, then
\begin{eqnarray*}
    \left| \int_{a_1}^{b_1} \frac{p_n^2(x)}{z-x} q_n(x)\, d\mu_1(x) \right| &\leq&
      \int_{a_1}^{b_1} \frac{p_n^2(x)}{|z-x|} q_n(x)\, d\mu_1(x) \\
       &\leq & \frac{1}{\delta_K} \int_{a_1}^{b_1} p_n^2(x)\ q_n(x)\, d\mu_1(x) \\
       &=& \frac{1}{\delta_K} \frac{1}{\gamma_n^2(q_n\,d\mu_1)}
\end{eqnarray*}
where $\gamma_n(q_n\, d\mu_1)$ is the leading coefficient of the $n$th degree orthonormal polynomial for the (varying) measure $q_n\, d\mu_1$.
We then find
\begin{eqnarray*}
   \limsup_{n \to \infty}  \left|  \int_{a_1}^{b_1} \frac{d\mu_1(x)}{z-x} - \frac{Q_{2n-1}(z)}{P_{n,n}(z)} \right|^{1/n}
     &\leq& \lim_{n \to \infty} \left( \frac{1}{\gamma_n^2(q_n\,d\mu_1) |p_n^2q_n|(z)|} \right)^{1/n} \\
     &=& \exp \Bigl( 2 U(z;\nu_1) + U(z:\nu_2) - \ell_1 \Bigr),
\end{eqnarray*}
(see, e.g., \cite[Corollaries on p.~199]{NikiSor}). Hence one has convergence with exponential rate 
\[  \limsup_{n \to \infty}  \left|  \int_{a_1}^{b_1} \frac{d\mu_1(x)}{z-x} - \frac{Q_{2n-1}(z)}{P_{n,n}(z)} \right|^{1/n} \leq e^{\gamma} \]
for $z$ in the set
\[     C^1_\gamma = \{ z \in \mathbb{C} : 2U(z;\nu_1) + U(z;\nu_2) -\ell_1 \leq \gamma \}, \qquad \gamma < 0.  \]
In a similar way one finds that
\[ \limsup_{n \to \infty}  \left|  \int_{a_2}^{b_2} \frac{d\mu_2(x)}{z-x} - \frac{R_{2n-1}(z)}{P_{n,n}(z)} \right|^{1/n} \leq e^{\gamma}  \]
whenever $z \in C^2_\gamma$, with
\[    C^2_\gamma = \{ z \in \mathbb{C} : U(z;\nu_1) + 2U(z;\nu_2) - \ell_2 \leq \gamma\}, \qquad \gamma < 0.  \]
Hence, Theorem \ref{thm:C4} gives the following result.

\begin{corollary}
Suppose that $f$ is analytic in a domain $\Omega$ that contains $\mathbb{C} \setminus C^1_\gamma$, with $\gamma < 0$, then
\[   \limsup_{n \to \infty} \left| \sum_{j=1}^{2n} \lambda_{j,2n}^{(1)} f(x_{j,2n}) - \int_{a_1}^{b_1} f(x) \, d\mu_1(x) \right|^{1/n} \leq e^{\gamma} \]
so that the first quadrature rule converges. If $f$ is analytic in a domain $\Omega$ that contains $\mathbb{C} \setminus C^2_\gamma$, with $\gamma < 0$,
then
\[   \limsup_{n \to \infty} \left| \sum_{j=1}^{2n} \lambda_{j,2n}^{(2)} f(x_{j,2n}) - \int_{a_2}^{b_2} f(x) \, d\mu_2(x) \right|^{1/n} \leq e^{\gamma} \]
and the second quadrature rule converges. Hence in order that both quadrature rules converge, a sufficient condition is that
$f$ is analytic in a domain $\Omega$ that contains $C^1_\gamma \cup C^2_\gamma$, with $\gamma < 0$, in which case the quadrature rules
converge at an exponential rate.
\end{corollary}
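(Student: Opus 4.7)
The plan is to adapt the contour integral argument from the proof of Theorem \ref{thm:C4} and sharpen it using the exponential rate bound for the Hermite-Padé error derived immediately above the Corollary. For the first quadrature I would begin by choosing a closed contour $\Gamma$ in $\Omega$ that encircles $[a_1,b_1]\cup[a_2,b_2]$; this is possible because the hypothesis forces $\Omega$ to be an open neighborhood of the closed bounded region $\overline{\mathbb{C}\setminus C^1_\gamma}$, which contains both intervals. Exactly as in Theorem \ref{thm:C4}, the residue theorem applied to $Q_{2n-1}/P_{n,n}$ (whose $2n$ simple poles are the quadrature nodes with residues $\lambda_{j,2n}^{(1)}$) together with Fubini and Cauchy applied to $f\cdot g_1$ gives the identity
\[
\sum_{j=1}^{2n}\lambda_{j,2n}^{(1)} f(x_{j,2n}) - \int_{a_1}^{b_1} f(x)\,d\mu_1(x) = \frac{1}{2\pi i}\int_\Gamma f(z)\left[\frac{Q_{2n-1}(z)}{P_{n,n}(z)} - g_1(z)\right]dz.
\]

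To extract the rate $e^\gamma$, I would choose $\Gamma$ as a level set $\Gamma_\alpha := \{z : h(z) = \alpha\}$ of the function $h(z) := 2U(z;\nu_1)+U(z;\nu_2)-\ell_1$, for some $\alpha\in(\gamma,0)$ close to $\gamma$. Because $h$ is harmonic off the intervals and tends to $-\infty$ at infinity, for $\alpha$ sufficiently close to $\gamma$ the set $\Gamma_\alpha$ is a finite union of closed Jordan curves encircling the two intervals and contained in $\mathbb{C}\setminus C^1_\gamma\subset\Omega$. The inequality established just before the Corollary,
\[
\left|g_1(z) - \frac{Q_{2n-1}(z)}{P_{n,n}(z)}\right| \leq \frac{1}{\delta_\Gamma\,\gamma_n^2(q_n\,d\mu_1)\,|p_n^2(z)\,q_n(z)|},
\]
combined with the uniform $n$th root asymptotics $|p_n(z)|^{1/n}\to e^{-U(z;\nu_1)}$ and $|q_n(z)|^{1/n}\to e^{-U(z;\nu_2)}$ on compact subsets of $\mathbb{C}\setminus[a_1,b^*]$ and $\mathbb{C}\setminus[a^*,b_2]$ respectively, together with \eqref{gamma}, yields the uniform estimate
\[
\limsup_{n\to\infty}\sup_{z\in\Gamma_\alpha}\left|g_1(z)-\frac{Q_{2n-1}(z)}{P_{n,n}(z)}\right|^{1/n}\leq e^{\sup_{\Gamma_\alpha}h} = e^\alpha.
\]

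Inserting this bound into the contour representation and using boundedness of $f$ on the compact set $\Gamma_\alpha$ together with the fixed length $|\Gamma_\alpha|$ gives
\[
\limsup_{n\to\infty}\left|\sum_{j=1}^{2n}\lambda_{j,2n}^{(1)}f(x_{j,2n}) - \int_{a_1}^{b_1}f\,d\mu_1\right|^{1/n}\leq e^\alpha,
\]
and letting $\alpha\searrow\gamma$ produces the claimed bound $e^\gamma$ for the first quadrature. The second quadrature is treated identically after replacing $Q_{2n-1}$, $g_1$, $h$, $C^1_\gamma$ by $R_{2n-1}$, $g_2$, $U(\cdot;\nu_1)+2U(\cdot;\nu_2)-\ell_2$, $C^2_\gamma$. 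For the final assertion, the natural reading is that $\Omega\supseteq(\mathbb{C}\setminus C^1_\gamma)\cup(\mathbb{C}\setminus C^2_\gamma)=\mathbb{C}\setminus(C^1_\gamma\cap C^2_\gamma)$, which is exactly what is needed to apply both individual arguments simultaneously. The main technical obstacle is upgrading the pointwise $n$th root estimate to the uniform version on $\Gamma_\alpha$; this is handled by standard potential-theoretic arguments (principle of descent and upper envelope theorem) which apply here because $\mu_1',\mu_2'>0$ almost everywhere on their respective intervals guarantees that the $n$th root behavior of $p_n$, $q_n$, and $\gamma_n$ is regular.
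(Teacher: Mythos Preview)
Your proposal is correct and follows essentially the paper's approach: the paper states the Corollary as an immediate consequence of Theorem~\ref{thm:C4} combined with the rate estimate for the Hermite--Pad\'e error derived just above it, and your contour-integral argument with a level-set contour $\Gamma_\alpha$ followed by the limit $\alpha\searrow\gamma$ is exactly the natural way to make this precise. Your reading of the final assertion as requiring $\Omega\supseteq(\mathbb{C}\setminus C^1_\gamma)\cup(\mathbb{C}\setminus C^2_\gamma)$ is also the intended one.
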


\section{Conclusion and future directions}

We showed that simultaneous quadrature for an Angelesco system with two measures may not always converge to the required integrals.
In particular Theorem \ref{thm:31} shows that one cannot approximate the integral of a function that is positive on $[b^*,b_1]$ and zero
elsewhere in the case when $b^* < b_1$. The quadrature rules do converge to the correct integrals if the two intervals are of the same size
or if the function $f$ is analytic in a big enough region, so that function values in the gap $[b^*,b_1]$ or $[a_2,a^*]$ can be recovered
from information on the interval $[a_2,b_2]$ or $[a_1,b_1]$ respectively. The main disadvantage is that quadrature weights are changing sign
and they may grow exponentially fast. The main advantage is that one needs to evaluate the function
for both quadrature rules at the same $2n$ points and the degree of accuracy is $3n-1$, which is higher than what one would get if one
uses Gaussian quadrature with $n$ nodes in every interval, which also uses $2n$ function evaluations but which has degree of accuracy $2n-1$. 
Angelesco systems may not be the most
useful systems for simultaneous quadrature, but other systems (AT systems, Nikishin systems) are more promising and are also of more
interest for practical applications.   

Of course there are many problems left over for future work. First of all we restricted our analysis to two disjoint intervals, but surely much
of our results can be extended to several disjoint intervals. The equilibrium problem will be more complicated and in particular finding the support of the measures for the vector equilibrium problem (the Mhaskar-Rakhmanov-Saff numbers) will be more involved.
Another problem is to find the distribution of the nodes $x_{k,rn}$ whenever the quadrature rules converge, hence not only for the Gaussian quadrature rules, but also when the rule has degree of exactness less than $(r+1)n-1$. In particular one would like to find an analogue of the results of
Bloom, Lubinsky and Stahl \cite{BLS1, BLS2}, and one would expect that the limiting distribution of the quadrature nodes is a convex combination
of the limiting distribution of the zeros of the type II multiple orthogonal polynomial $P_{(n,n,\ldots,n)}$ and a positive measure supported on the
intervals. In this paper we restricted our analysis to Angelesco systems (measures supported on disjoint intervals). Earlier, Fidalgo Prieto, Ill\'an and L\'opez Lagomasino investigated simultaneous Gaussian quadrature for Nikishin systems. Many other systems of measures can be investigated, in particular
systems of overlapping intervals, algebraic Chebyshev systems (AT-systems), and special multiple orthogonal polynomials for which explicit formulas
are known. In particular it would be of practical importance to investigate simultaneous Gaussian quadrature for $r$ exponential weights of the
form $w_j(x) = e^{-x^2+c_jx}$, with $c_i \neq c_j$ whenever $i\neq j$. These weights correspond to normal densities with means at $c_j/2$ $(1 \leq j \leq r)$, which can be used to filter a signal $f$ for frequencies near $c_j/2$. The corresponding multiple orthogonal polynomials are multiple
Hermite polynomials, and these have been investigated extensively in random matrix theory, e.g., \cite{BK1,ABK,BK3,BK}.
Finally, it is important to have efficient numerical techniques to generate the Gaussian quadrature formulas, in particular to compute
the quadrature nodes (i.e., the zeros of type II multiple orthogonal polynomials) and the quadrature weights. Some work in this direction has already been initiated by Milovanovi\'c and Stani\'c \cite{MiloStan}.

There may be an alternative way to obtain useful information of the positive quadrature weights $\lambda_{j,2n}^{(1)}$ $(1 \leq j \leq n)$ and
$\lambda_{j+n,2n}^{(2)}$ $(1 \leq j \leq n)$ if one can extend some of Totik's results in \cite{Totik} on Christoffel functions for varying weights.
In \cite[\S 6, Thm. 6 on p.~78]{Nevai} it is shown that if $\mu$ is a positive measure on $[a,b]$ for which $\mu' >0$ almost everywhere on $[a,b]$ and $g>0$ is a continuous function on $(a,b)$, then one has the
following asymptotic result for the Christoffel functions for $g\,d\mu$ and $d\mu$:
\[        \lim_{n \to \infty} \frac{\lambda_n(x;g\, d\mu)}{\lambda_n(x;d\mu)} = g(x), \]
uniformly on $[a+\epsilon,b-\epsilon]$. A similar result for the varying weight $q_n\, d\mu_1$ of the form
\[   \lim_{n \to \infty}  \frac{\lambda_n(x;q_n\,d\mu_1)}{q_n(x) \lambda_n(x;d\mu_1)} = 1, \]
uniformly on $[a_1,b_1]$ (or on $[a_1+\epsilon,b_1-\epsilon]$), together with the relation \eqref{lambdaq}, would give
\[  \lim_{n \to \infty} \frac{\lambda_{k,2n}^{(1)}}{\lambda_n(x_{k,2n};d\mu_1)} = 1, \]
for any sequence of zeros for which $x_{k,2} \to x \in [a_1,b_1]$ (or $[a_1+\epsilon,b_1-\epsilon]$).

\section*{Acknowledgments}
DL is supported by NSF grant DMS1362208. 
WVA is supported by FWO research projects G.0934.13 and G.0864.16 and KU Leuven research grant OT/12/073.
This work was done while WVA was visiting Georgia Institute of Technology. He would like to thank
FWO-Flanders for the financial support of his sabbatical and the School of Mathematics at Georgia Institute of Technology for their hospitality.

\begin{quote}
Doron Lubinsky \\
School of Mathematics \\
Georgia Institute of Technology \\
686 Cherry Street \\
Atlanta, Georgia 30332-0160 \\
U.S.A. \\
\texttt{lubinsky@math.gatech.edu}

Walter Van Assche \\
Department of Mathematics \\
University of Leuven \\
Celestijnenlaan 200B box 2400 \\
BE-3001 Leuven \\
Belgium \\
\texttt{walter@wis.kuleuven.be}
\end{quote} 

\end{document}